\documentclass{article}

\usepackage{arxiv}

\usepackage[utf8]{inputenc} 
\usepackage[T1]{fontenc}    
\usepackage{amsmath,amssymb,amsthm,color}
\usepackage{bm,bbm}
\usepackage{mathtools}
\usepackage{hyperref}       
\usepackage{url}            
\usepackage{amsfonts}       
\usepackage{microtype}      
\usepackage{graphicx}
\usepackage{doi}

\newtheorem{theorem}{Theorem}
\newtheorem*{theorem*}{Theorem}
\newtheorem{lemma}[theorem]{Lemma}
\newtheorem{proposition}[theorem]{Proposition}
\newtheorem{corollary}[theorem]{Corollary}

\theoremstyle{remark}

\newtheorem*{claim*}{\bf Claim}

\author{\href{https://orcid.org/0000-0003-1932-5115}{Mahmoud  Abdelgalil} \\
	Mechanical and Aerospace Engineering\\
	University at Buffalo, SUNY\\
	Buffalo, NY 14260 \\
	\texttt{maabdelg@buffalo.edu} 
	\And
	\href{https://orcid.org/0000-0003-0012-5447}{Tryphon T.~Georgiou} \\
	Mechanical and Aerospace Engineering\\
	University of California, Irvine\\
	Irvine, CA 92697\\
	\texttt{tryphon@uci.edu}
}

\date{}

\renewcommand{\headeright}{}
\renewcommand{\undertitle}{}
\renewcommand{\shorttitle}{}

\newcommand{\M}{M}
\newcommand{\Cinf}{C^{\infty}(\M)}
\newcommand{\CinfV}[1]{C_{#1}^{\infty}(\M)}
\newcommand{\cconv}{$\tfrac{1}{2}d^2$-convex}
\newcommand{\X}{\mathfrak{X}(\M)}
\newcommand{\Diff}{\mathrm{Diff}_0(\M)}
\newcommand{\SDiff}{\mathrm{SDiff}(\M)}
\newcommand{\Dens}{\mathrm{Dens}(\M)}
\DeclareMathOperator{\grad}{\mathrm{grad}}
\DeclareMathOperator{\expg}{\mathrm{exp}^{\text{$g$}}}
\DeclareMathOperator{\Div}{div}

\DeclareMathOperator{\Hess}{Hess}
\newcommand{\hor}{\mathrm{Hor}}
\newcommand{\GL}{\mathrm{GL}^+(\mathbb{R}^n)}
\newcommand{\Sym}{\mathrm{Sym}(n)}
\newcommand{\Skew}{\mathrm{Skew}(n)}
\title{A Ballantine-type factorization of diffeomorphisms}

\begin{document}
\maketitle

\begin{abstract}
We show that on a compact, connected, Riemannian manifold without boundary, every isotopic to the identity diffeomorphism can be written as a finite composition of optimal mass transport maps. This result represents a non-linear infinite dimensional generalization of Ballantine's factorization results.
\end{abstract}

\keywords{Optimal Mass Transport, Group of Diffeomorphisms.}

\section{Introduction}

In a series of publications culminating in \cite {ballantine1968bproducts,Ballantine1970}, Ballantine provides precise characterizations of the classes of matrices, real and complex, that can be written as the product of a given number of positive factors. Notably, he showed that, regardless of dimension, every matrix of positive determinant can be written as the product of \emph{five} positive definite factors, symmetric in the real case \cite{ballantine1968bproducts} and Hermitian in the complex case \cite{Ballantine1970}. The uniform upper bound on the required number of factors turns out to be instrumental in solving a fundamental problem in linear feedback theory \cite{AbdelgalilGeorgiou2026TAC}. 
In a recent manuscript \cite{AbdelgalilGeorgiou2026}, we offer a fresh perspective on Ballantine's results through the lens of optimal mass transport (OMT) \cite{Villani2009}. Specifically, by interpreting symmetric positive definite matrices as OMT maps between non-degenerate centered Gaussian distributions, we recognized that Ballantine's result in the real case is a statement on the extent to which OMT maps fail to commute in the linear category. Equivalently, his result implies that the \emph{holonomy} group in the restriction of Otto's principal bundle \cite{otto2001geometry} to the linear category \cite{modin2017geometry} is the entire structure group \cite{AbdelgalilGeorgiou2025}. 
The close analogy between the linear and smooth categories \cite{modin2017geometry,khesin2024information} naturally invites an investigation into whether a Ballantine-type factorization statement holds for the latter, i.e., whether every diffeomorphism in the identity component of the diffeomorphism group of a manifold $\M$, henceforth denoted by $\Diff$, can be written as the composition of a finite number of OMT maps. Our earlier work \cite[Theorem 7]{abdelgalil2026holonomy} proves a ``soft'' version of this statement, i.e., that, on a compact connected Riemannian manifold without boundary, the group generated by diffeomorphic OMT maps is a dense subset of $\Diff$ in the $C^\infty$-topology. The main purpose of the present manuscript is to upgrade this density statement into an exact factorization result. Namely, we prove the following.
\begin{theorem}\label{thm:OMT_DIFF}
    Let $(\M,g)$ be a compact, connected Riemannian manifold without boundary. Then, there exists an integer $r\in\mathbb{N}$ that depends solely on the minimal immersion dimension of $\M$, and an open neighborhood $\mathcal{O}\subset\Diff$ of the identity, such that every diffeomorphism in $\mathcal{O}$ is a composition of, at most, $r$ OMT maps.
\end{theorem}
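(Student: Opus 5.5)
We aim to show that the composition map from a product of OMT neighborhoods onto a neighborhood of the identity in $\Diff$ is locally surjective, using an implicit function/Nash--Moser-type argument, with the number of factors controlled by an immersion of $\M$.

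\emph{Description of OMT maps near the identity.} By McCann's theorem, a diffeomorphism near the identity is an OMT map exactly when it has the form $\varphi_f(x)=\expg_x(\grad f(x))$ with $f\in\Cinf$ small in $C^2$, so that $f$ is \cconv. The OMT maps near $\mathrm{id}$ are therefore parametrized by a neighborhood of $0$ in $\Cinf$ modulo constants. Fix the minimal immersion $\iota:\M\to\mathbb{R}^N$. Every vector field then decomposes as $X=\sum_{i=1}^N a_i\,\grad(\iota_i)$, with the $a_i$ determined by projecting $X$ onto $\mathrm{span}\{\grad\iota_i\}$ pointwise; these gradients span $T_x\M$ at each point because $\iota$ is an immersion.

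\emph{Linear step.} We need gradients, but the $a_i\grad\iota_i$ are not gradients. The plan is to use commutators. The flows of $\grad f$ and $\grad h$ compose, to second order, to the flow of $\grad(f+h)+\tfrac12[\grad f,\grad h]$. Taking $f=\epsilon\iota_i$ and $h=\epsilon a_i$ (suitably adjusted) produces brackets of the form $[\grad\iota_i,\grad a_i]$. Together with gradient fields, brackets of gradient fields span all of $\X$: every vector field is a Hodge gradient plus a divergence-free part, and the divergence-free part can be written as a finite sum $\sum_i[\grad u_i,\grad v_i]$ with $u_i=\iota_i$ built from the immersion coordinates and the $v_i$ solving a linear elliptic system.

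Concretely, we define a map
\begin{equation*}
\Phi(f_1,\dots,f_r)=\varphi_{f_1}\circ\cdots\circ\varphi_{f_r},
\end{equation*}
with $r$ of order $c\,N$ for a universal $c$. We base it not at $0$ but at a point $(\epsilon\iota_1,-\epsilon\iota_1,\dots)$, arranged in group-commutator patterns like $\varphi_{\epsilon\iota_i}\circ\varphi_{h}\circ\varphi_{-\epsilon\iota_i}\circ\varphi_{-h}$, so that $\Phi$ still equals $\mathrm{id}$ at the base point but has a surjective differential there. The differential is $(\dot f_j)\mapsto\sum_j \mathrm{Ad}_{g_j}\grad\dot f_j$. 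Surjectivity onto $\X$, with a tame right inverse, is exactly the span statement above: $\mathrm{Ad}$ by the flow of $\epsilon\grad\iota_i$ twists gradients by $\epsilon[\grad\iota_i,\cdot]+O(\epsilon^2)$. The right inverse is built from elliptic operators (Laplacian inverses and the projection onto the span of $\grad\iota_i$), so it loses only finitely many derivatives.

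\emph{Nonlinear step and main obstacle.} We apply the Nash--Moser inverse function theorem in the tame Fréchet category (Hamilton's theorem) to $\Phi$, near the base point, between $\Cinf^r$ and $\Diff$. This yields local surjectivity onto a $C^\infty$-neighborhood $\mathcal{O}$ of $\mathrm{id}$. The main obstacle is constructing a tame right inverse for $D\Phi$ that is uniform on a whole neighborhood of the base point, not just at it. The $O(\epsilon^2)$ corrections must be absorbed: we fix $\epsilon$ small so the perturbation of the elliptic system stays invertible, and we check tame estimates for composition and the exponential map. If Nash--Moser proves too delicate, the fallback is to work in $H^s$ Sobolev diffeomorphism groups via Ebin--Marsden. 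There $\Phi$ is only continuous, not $C^1$, in the left-composed factors, so we would order the factors to use right-translation smoothness and apply a Banach-space implicit function theorem, recovering smoothness by a regularity bootstrap.
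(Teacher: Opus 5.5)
\textbf{The linear step has a genuine gap.} Your architecture matches the paper's: a non-zero base point, a tame right inverse of the differential on a neighborhood, Hamilton's Nash--Moser theorem, then translating back. The problem is that the spanning fact you rely on is the wrong one. That gradients together with brackets $[\grad\iota_i,\grad v]$ span $\X$ is a Lie-algebraic statement. It gives density of the generated group, or injectivity of the adjoint of the differential, but not a right inverse with tame estimates. Modulo gradients, $[\grad\iota_i,\grad v]\equiv-2\,g^{-1}\Hess\iota_i\,\grad v$. So your ``linear elliptic system'' for the $v_i$ is elliptic, and hence robust under the $O(\epsilon)$ corrections of the true differential, only if $\mathrm{span}\{\xi^\sharp,(g^{-1}\Hess\iota_1)_x\xi^\sharp,\dots,(g^{-1}\Hess\iota_N)_x\xi^\sharp\}=T_x\M$ for every $x$ and every $\xi\neq0$. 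This is exactly leading-order invertibility of the principal symbol $\sum_i(P_i\xi^\sharp)\otimes(P_i\xi^\sharp)^\flat$ of $\mathcal{L}\mathcal{L}^*$. With linear immersion coordinates alone, as in your base point and your $O(N)$ count, the condition fails in general. For the standard $S^n\subset\mathbb{R}^{n+1}$ one has $g^{-1}\Hess x_k=-x_kI$, so the span is the line $\mathbb{R}\xi^\sharp$. On the sphere the bracket statement still holds, but only through lower-order terms: $[\grad x_k,\grad v]\equiv-2v\grad x_k$ modulo gradients. The resulting right inverse is zeroth order in the $v_k$, and the first-order $O(\epsilon)$ corrections of the actual differential are then unbounded relative to it, so the Neumann series loses a derivative at every step.

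The missing idea is to enlarge the family of potentials until the symbol condition holds. The paper adds the products $\beta^k\beta^l$, whose Hessians contain $d\beta^k\otimes d\beta^l+d\beta^l\otimes d\beta^k$. These span $\mathrm{Sym}^2(T^*_x\M)$, so $S=\xi\otimes v^\flat+v^\flat\otimes\xi$ recovers any $v$ from $\xi^\sharp$ and $g_x^{-1}S\,\xi^\sharp$. It adds the cubes $(\beta^k)^3$ so that the bracket-density result gives $\ker\mathcal{T}_0^*=0$, and then gets invertibility from Fredholm and Kato perturbation arguments. Your $O(N)$ count, versus the paper's $r=2+4N+2\tbinom{N+1}{2}$, is a symptom of the missing products.

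Three further points in your linearization. First, the differential of $f\mapsto\expg(\grad f)$ is not $\mathrm{Ad}\circ\grad$. A priori it is the average $\int_0^1(\Phi_1\circ\Phi_\tau^{-1})_*\grad\delta_\tau\,d\tau$ along the displacement interpolation. It collapses to the local operator $(\Phi_1)_*\grad_{\tilde g_f}\dot f$, with $\tilde g_f=(\int_0^1((\Phi_\tau)^*g)^{-1}d\tau)^{-1}$, only because the linearized Hamilton--Jacobi equation is a pure transport equation. This locality is specific to OMT factors (for gradient flows the average is non-local), and it is what makes any elliptic argument possible. Second, expanding $\mathrm{Ad}_{\exp(\epsilon Y)}=\mathrm{Id}+\epsilon\,\mathrm{ad}_Y+O(\epsilon^2)$ as operators loses one derivative per order in $\epsilon$, because it expands the composition $\dot f\circ\Phi_\epsilon^{-1}$. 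The remainder is therefore not a small perturbation on any fixed Sobolev scale. The paper avoids this by writing $d\mathrm{End}_m=\mathcal{R}\circ\mathcal{L}\circ\mathcal{C}$, keeping the composition operators $\mathcal{R},\mathcal{C}$ exact and invertible, and expanding in $\varepsilon$ only the zeroth-order coefficients $P_i$ of $\mathcal{L}$. Third, $\varphi_{-\epsilon\iota_i}$ is not the inverse of $\varphi_{\epsilon\iota_i}$; the inverse is $\varphi_{(\epsilon\iota_i)^c}$. So your base point does not map to $\mathrm{id}$. This is harmless if, as in the paper, you translate back afterwards by $\mathrm{End}_m(\bm\phi_\star^c)$.
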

Interestingly, Theorem \ref{thm:OMT_DIFF} imposes no restrictions on the topology or the curvature of $(\M,g)$. The theorem also shows that the number of needed factors is uniformly bounded in the open neighborhood $\mathcal{O}$. Whether this uniformity extends to all of $\Diff$ is an open question.

The remainder of this manuscript is organized as follows. In Section \ref{sec:prelim}, we fix our notation and collect some standard facts that we use in the sequel. In Section \ref{sec:corollaries}, we discuss some immediate corollaries of Theorem \ref{thm:OMT_DIFF}. Section \ref{sec:prelude} contains a finite-dimensional prelude, i.e., the linear case, which the proof of Theorem \ref{thm:OMT_DIFF} in Section \ref{sec:technical} mirrors. 

\section{Preliminaries}\label{sec:prelim}
Throughout, $(\M, g)$ is a smooth, compact, connected $n$-dimensional Riemannian manifold without boundary, and $\mu_g$ is its Riemannian measure, normalized to unit total mass. We denote by $N$ the \emph{minimal immersion dimension} of $\M$, i.e., the least $N\in\mathbb{N}$ for which a smooth immersion $\M\rightarrow\mathbb{R}^N$ exists. By Whitney's immersion theorem \cite[Theorem 6.20]{Lee2012}, $N\leq 2n-1$ whenever $n\geq2$.
We write $\Cinf$ and $\X$ for smooth functions and vector fields on $\M$ and, for any $\ell\in\mathbb{N}$, $\CinfV{\ell}$ for the smooth sections of the trivial vector bundle $\M\times\mathbb{R}^\ell$, which we identify with $\ell$-tuples of smooth functions
\[
\bm{\phi}:=(\phi^1,\dots,\phi^\ell).
\]
Tuples are written in boldface throughout, and the concatenation of $\bm{\phi}\in\CinfV{\ell}$ and $\bm{\eta}\in\CinfV{k}$ is written $(\bm{\phi},\bm{\eta})\in\CinfV{\ell+k}$. More generally, $C^\infty(\M,E)$ denotes smooth sections of a vector bundle $E\rightarrow\M$. We use $\grad$, $\nabla$, and $\Div$ to denote the gradient, the Levi-Civita connection, and the divergence of $(\M,g)$. Pointwise norms of tensors with respect to $g$ are written $|\cdot|_g$, and, for $k\in\mathbb{N}$, the $C^k$-norm of a tensor field $A$ is
\begin{align*}\textstyle
    \|A\|_{C^k}:=\sum_{j=0}^{k}\,\sup_{\M}|\nabla^jA|_g.
\end{align*}
For any integer $s\geq0$, $H^s(\M)$, $H^s(T\M)$, and $H_m^s(\M)$ denote the completions of $\Cinf$, $\X$, and $\CinfV{m}$ with respect to the Sobolev norm
$$\textstyle
\|A\|_{H^s}^2:=\sum_{j=0}^{s}\textstyle\int_{\M}|\nabla^jA|_g^2\,d\mu_g.
$$
For a family of tensor fields $A_\varepsilon$ indexed by $\varepsilon>0$, we write $A_\varepsilon=O(\varepsilon^k)$ if, for every $j\in\mathbb{N}$, there exists $C_j>0$ such that $\|A_\varepsilon\|_{C^j}\leq C_j\,\varepsilon^k$ for all sufficiently small $\varepsilon$.
We identify a Riemannian metric $h$ on $\M$ with the bundle isomorphism $h:T\M\rightarrow T^*\M$, $X\mapsto h(X,\cdot)$, whose inverse is $h^{-1}:T^*\M\rightarrow T\M$. For the reference metric $g$, the resulting \emph{musical isomorphisms} are denoted by $\flat$ and $\sharp$, so that $\grad\psi=(d\psi)^\sharp$. For a $(0,2)$-tensor $B$, $g^{-1}B$ is the endomorphism characterized by the relation $g(g^{-1}B\,X,Y)=B(X,Y)$. An endomorphism field $P$ is \emph{self-adjoint} with respect to $g$ if, for any $X,Y\in\X$, $g(PX,Y) = g(PY,X)$, and is positive definite if it is self-adjoint and $g_x(P_xv,v)>0$ for all $x\in\M$ and all $v\in T_x\M\backslash\{0\}$. The identity endomorphism of $T\M$, like the identity matrix, is denoted by $I$, whereas $\mathrm{id}\in\Diff$ denotes the identity diffeomorphism and $\mathrm{Id}_V$ the identity operator on $V$.
The Hessian of a function $\psi\in\Cinf$ is
\begin{align*}
    \Hess\psi(X,Y) = g(\nabla_X\grad\psi,Y),
\end{align*}
defined for any $X,Y\in \X$, and since the connection is torsion-free, it holds that $\Hess\psi(X,Y) = \Hess\psi(Y,X)$. The Lie bracket of vector fields is 
$[X,Y]=\nabla_XY-\nabla_YX$, and $\mathfrak{L}_Z$ denotes the Lie derivative along $Z\in\X$. Since $(\mathfrak{L}_Zg)(X,Y)=g(\nabla_XZ,Y)+g(X,\nabla_YZ)$, we have, for any $\psi\in\Cinf$,
\begin{align}\label{eq:killing}
    \mathfrak{L}_{\grad\psi}\,g=2\Hess\psi.
\end{align}
For any Riemannian metric $h$ on $\M$, we define the endomorphism field
\begin{align*}
    P_h&:=h^{-1}\circ g, & h(X,Y)&=g(P_h^{-1}X,Y),
\end{align*}
which is self-adjoint and positive with respect to $g$, and the $h$-gradient 
\[\grad_h\psi:=h^{-1}(d\psi)=P_h\grad\psi.\]
In particular, $\grad_g=\grad$ and $P_g=I$. On $\X$ and on $\CinfV{m}$, we use the $L^2(\mu_g)$ pairings
\begin{align*}
    \langle X,Y\rangle_{L^2}&:=\textstyle\int_{\M}g(X,Y)\,d\mu_g, & \langle\bm{\delta},\bm{\eta}\rangle_{L^2}&:=\textstyle\sum_{i=1}^{m}\int_{\M}\delta^i\eta^i\,d\mu_g,
\end{align*}
and $A^*$ denotes the formal adjoint of an operator $A$ with respect to these pairings.
For $\Phi\in\Diff$, $D\Phi:T\M\rightarrow T\M$ denotes the tangent map, $\Phi^*h:=h(D\Phi\,\cdot,D\Phi\,\cdot)$ and $\Phi_*h:=(\Phi^{-1})^*h$ the pullback and the pushforward of a metric $h$, $\Phi_*X:=D\Phi\circ X\circ\Phi^{-1}$ the pushforward of a vector field, and $\Phi_\sharp\mu$ the pushforward of a measure. For any metric $h$ and any $\psi\in\Cinf$,
\begin{align}\label{eq:naturality}
    \Phi_*(\grad_h\psi)=\grad_{\Phi_*h}(\psi\circ\Phi^{-1}).
\end{align}
We denote the Riemannian exponential map of $(\M,g)$ by $\expg$, and, for any $X\in\X$, write
$$
\expg(X):x\mapsto\exp_x^g(X(x)).
$$
As is well-known \cite{McCann2001}, an OMT map is of the form $\expg(\grad\phi)$ for a $\tfrac{1}{2}d^2$-convex\footnote{See the definition of $\tfrac{1}{2}d^{2}$-concave functions in \cite[Section 3.3]{rachev1998mass}, \cite{McCann2001}, from which $\phi$ is \cconv{} if $-\phi$ is $\tfrac{1}{2}d^{2}$-concave.} potential $\phi$, where $d$ is the Riemannian distance function. By \cite[Theorem 13.5]{Villani2009}, there exists $R>0$ such that every $\phi\in\Cinf$ with $\|\phi\|_{C^2}\leq R$ is \cconv{} and, for such $\phi$, McCann's \emph{displacement interpolation} \cite{McCann1997}
\begin{align*}
    \Phi^\phi_t:=\expg(t\grad\phi), \quad t\in[-1,1],
\end{align*}
is a smooth curve in $\Diff$ with $\Phi^\phi_0=\mathrm{id}$ \cite{McCann2001}. The associated Eulerian velocity field $\dot{\Phi}^\phi_t\circ(\Phi^\phi_t)^{-1}$ is the gradient of the potential $\alpha^\phi_t$ that solves the Hamilton--Jacobi equation
\begin{align}\label{eq:HJ}
    \partial_t\alpha^\phi_t+\tfrac{1}{2}|\grad\alpha^\phi_t|_g^2&=0, & \alpha^\phi_0&=\phi.
\end{align}
The \emph{conjugate potential} of $\phi$ is
\begin{align}\label{eq:ctransform}
    \phi^c:=-\alpha^\phi_1=-\bigl(\phi+\tfrac{1}{2}|\grad\phi|_g^2\bigr)\circ\expg(\grad\phi)^{-1},
\end{align}
which is the $\tfrac{1}{2}d^2$-transform of $\phi$ \cite[Chapter 5]{Villani2009}. The inverse of an OMT map is again an OMT map, i.e.,
\begin{align}\label{eq:inverse_OMT}
    \expg(\grad\phi)^{-1}=\expg(\grad\phi^c),
\end{align}
and, by \eqref{eq:ctransform} and the tame calculus, $\phi\mapsto\phi^c$ is continuous in the $C^\infty$-topology, with $0^c=0$.
For $\ell\in\mathbb{N}$, we define
\begin{align}\label{eq:Oell}
    O_\ell:=\{\bm{\phi}\in\CinfV{\ell}~|~\|\phi^i\|_{C^2}<R,\ \forall i\in\{1,\dots,\ell\}\},
\end{align}
which is an open set, and, for $\bm{\phi}\in O_\ell$, the tuple of conjugate potentials
\begin{align*}
    \bm{\phi}^c:=\bigl((\phi^\ell)^c,\dots,(\phi^1)^c\bigr)\in\CinfV{\ell},
\end{align*}
the reversal dictated by the fact that the inverse of a composition is the composition of the inverses in reverse order.
The family of semi-norms $\{\|\cdot\|_{C^k}\}_{k\in\mathbb{N}}$ defines the $C^\infty$-topology. The identity component $\Diff$ of the group of smooth diffeomorphisms, equipped with the $C^\infty$-topology, is a Fr\'echet Lie group \cite[Part~I, Section 4]{Hamilton1982}, whose tangent bundle is trivialized by right translation, i.e.,
\begin{align*}
    T_{\Phi}\Diff&=\{X\circ\Phi~|~X\in\X\}, & T_{\mathrm{id}}\Diff&=\X.
\end{align*}
We write $\dot{\Phi}$ for a generic element of $T_\Phi\Diff$. For a map $F$ between Fr\'echet manifolds, $dF(u)[v]$ denotes its derivative at $u$ in the direction $v$.
\section{Corollaries to Theorem \ref{thm:OMT_DIFF}}\label{sec:corollaries}
Before we proceed to the proof of Theorem \ref{thm:OMT_DIFF}, we discuss some of its immediate implications. 
Since any subgroup of a connected topological group that contains an open neighborhood of the identity is the entire group, Theorem \ref{thm:OMT_DIFF} immediately implies the following.
\begin{corollary}
    Every diffeomorphism in $\Diff$ is a finite composition of diffeomorphic OMT maps.
\end{corollary}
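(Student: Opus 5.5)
The corollary follows from Theorem \ref{thm:OMT_DIFF} by a standard argument about connected topological groups. I will set this argument up carefully.

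Let $G\subset\Diff$ be the set of all finite compositions of diffeomorphic OMT maps, with the empty composition counted as $\mathrm{id}$.

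\textbf{$G$ is a subgroup.} It is closed under composition by construction. It is closed under inversion because of \eqref{eq:inverse_OMT}: the inverse of a diffeomorphic OMT map $\expg(\grad\phi)$ is the diffeomorphic OMT map $\expg(\grad\phi^c)$. For a composition we take the product of these inverses in reversed order, which again lies in $G$. Hence $G$ is a subgroup of $\Diff$. By Theorem \ref{thm:OMT_DIFF}, $G$ contains the open neighborhood $\mathcal{O}$ of $\mathrm{id}$.

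\textbf{$G$ is open and closed.} Left translation $L_\Psi$ by any $\Psi\in\Diff$ is a homeomorphism of $\Diff$ in the $C^\infty$-topology, since $\Diff$ is a Fr\'echet Lie group.
\begin{itemize}
\item For each $\Psi\in G$, the set $\Psi\circ\mathcal{O}=L_\Psi(\mathcal{O})$ is an open set containing $\Psi$, and it lies in $G$. So $G=\bigcup_{\Psi\in G}\Psi\circ\mathcal{O}$ is open.
\item The complement $\Diff\setminus G$ is the union of the left cosets $\Psi\circ G$ with $\Psi\notin G$. Each of these is open, being the image of the open set $G$ under a homeomorphism. So $\Diff\setminus G$ is open, and $G$ is closed.
\end{itemize}

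\textbf{Conclusion.} $G$ is nonempty, since $\mathrm{id}\in G$, and it is open and closed. The space $\Diff$ is connected: it is the identity component, in fact path-connected via isotopies. Therefore $G=\Diff$, which is the claim.

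The only point needing care is the inversion step. The inverse of a diffeomorphic OMT map must again be a diffeomorphic OMT map, and not merely the gradient map of a $\tfrac{1}{2}d^2$-convex potential that fails to be smooth. This is supplied by \eqref{eq:inverse_OMT} together with the smoothness of $\phi^c$ given by \eqref{eq:ctransform}.

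If one prefers to avoid the inversion step, there is an alternative. One can take the subgroup generated by the OMT maps in $\mathcal{O}$ from the outset. The same connectedness argument then shows that every element of $\Diff$ is a finite composition of such maps and their inverses, and \eqref{eq:inverse_OMT} again converts each inverse into an OMT map.
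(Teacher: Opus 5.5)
Your proposal is correct and follows the same route as the paper, which simply invokes the fact that a subgroup of a connected topological group containing an open neighborhood of the identity is the whole group; you have spelled that argument out (subgroup via \eqref{eq:inverse_OMT}, openness via left translates of $\mathcal{O}$, closedness via cosets, connectedness of $\Diff$). As a minor remark, the inversion step you single out can be bypassed entirely: the symmetric neighborhood $\mathcal{O}\cap\mathcal{O}^{-1}$ has finite powers forming an open, hence closed, subgroup equal to $\Diff$, and each of its elements lies in $\mathcal{O}$ and is therefore already a composition of at most $r$ OMT maps.
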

As discussed in Section \ref{sec:prelim}, each OMT map with a sufficiently small smooth potential is joined to $\mathrm{id}$ by McCann's displacement interpolation. The associated Eulerian velocity field is the gradient of a time-varying potential that solves the Hamilton--Jacobi equation \eqref{eq:HJ}. Concatenating these curves gives the following consequence of Theorem \ref{thm:OMT_DIFF}.
\begin{corollary}\label{cor:controllability}
For any $\Phi_{\rm fin}\in\Diff$ and any $T>0$, there exists a piecewise smooth curve of potentials $t\mapsto\alpha_t$, $t\in[0,T]$, such that the solution to the initial value problem
    \begin{align}\label{eq:controlsystem}
        \dot{\Phi}_t &= \grad\alpha_{t} \circ \Phi_t, & \Phi_0=\mathrm{id},
    \end{align}
    satisfies the endpoint constraint $\Phi_T= \Phi_{\rm fin}$. Moreover, if $\Phi_{\rm fin}\in\mathcal{O}$, then the total number of switches in the piece-wise smooth curve $t\mapsto\alpha_t$ may always be taken to be less than or equal to $r$.
\end{corollary}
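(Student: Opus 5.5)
We prove Corollary~\ref{cor:controllability} by concatenating McCann displacement interpolations, one for each OMT factor supplied by the preceding corollary and by Theorem~\ref{thm:OMT_DIFF}.

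First write $\Phi_{\rm fin}=\Psi_m\circ\cdots\circ\Psi_1$ with each $\Psi_i$ an OMT map. When $\Phi_{\rm fin}\in\mathcal{O}$, Theorem~\ref{thm:OMT_DIFF} gives $m\leq r$. In general the preceding corollary gives some finite $m$.

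Next we make sure every factor has a small potential, i.e.\ $\Psi_i=\expg(\grad\phi_i)$ with $\|\phi_i\|_{C^2}<R$. This is what allows the displacement interpolation $\Phi^{\phi_i}_t$, $t\in[0,1]$, to be a smooth curve in $\Diff$ from $\mathrm{id}$ to $\Psi_i$. This point needs care. In the general case we do not re-factor anything: we apply the subgroup argument to the subgroup generated by OMT maps with potentials in $O_1$. That subgroup contains $\mathcal{O}$, provided the factors produced in the proof of Theorem~\ref{thm:OMT_DIFF} have potentials in $O_1$, which is what the construction should deliver near the identity. Inverses are no obstacle, since by \eqref{eq:inverse_OMT} they are again OMT maps with potentials $\phi^c$, and these are small because $\phi\mapsto\phi^c$ is continuous with $0^c=0$. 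Since $\Diff$ is connected, this subgroup is all of $\Diff$, so every element is a finite product of small-potential OMT maps.

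We then use the fact stated in Section~\ref{sec:prelim}: the Eulerian velocity of $\Phi^{\phi}_t$ is $\grad\alpha^{\phi}_t$, where $\alpha^\phi_t$ solves \eqref{eq:HJ}. For time rescaling, set $\tau=T/m$. On the interval $[(i-1)\tau,i\tau]$ define
\[
\Phi_t=\Phi^{\phi_i}_{(t-(i-1)\tau)/\tau}\circ\Psi_{i-1}\circ\cdots\circ\Psi_1,
\qquad
\alpha_t=\tau^{-1}\,\alpha^{\phi_i}_{(t-(i-1)\tau)/\tau}.
\]
Differentiating in $t$ gives $\dot\Phi_t=\grad\alpha_t\circ\Phi_t$ on each interval, and $\Phi$ is continuous at the junctions. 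Since $t\mapsto\alpha_t$ is smooth on each subinterval, uniqueness of solutions to the ODE \eqref{eq:controlsystem} shows that this $\Phi_t$ is the solution, and $\Phi_T=\Psi_m\circ\cdots\circ\Psi_1=\Phi_{\rm fin}$.

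The potential curve $\alpha_t$ has at most $m$ smooth pieces, hence at most $m-1\leq r$ switches when $\Phi_{\rm fin}\in\mathcal{O}$.

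The main obstacle is the smallness requirement $\|\phi_i\|_{C^2}<R$ on each factor, which the bare statement of Theorem~\ref{thm:OMT_DIFF} does not make explicit. We expect it to follow from the neighborhood construction in Section~\ref{sec:technical}: the factors there are produced by an inverse-function argument near $\bm{\phi}=0$, and so should lie in $O_r$.
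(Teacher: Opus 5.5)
Your argument is correct and matches the paper's: concatenate time-rescaled McCann interpolations of the factors. The factors do lie in $O_{2m}=O_r$, because the proof of Theorem~\ref{thm:OMT_DIFF} writes each element of $\mathcal{O}$ as $\mathrm{End}_{2m}(\bm{\phi},\bm{\phi}_\star^c)$ with $\bm{\phi}\in\mathcal{U}\subset O_m$ and $\bm{\phi}_\star^c\in O_m$. Note that the inverse-function argument is run near $\bm{\phi}_\star=\varepsilon\bm{\psi}$, not near $\bm{\phi}=0$, where the differential is not surjective.

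One justification should be replaced. Continuity of $\phi\mapsto\phi^c$ at $0$ does not give $\phi^c\in O_1$ for every $\phi\in O_1$. You do not need that, though. The inverse $\expg(\grad\phi)^{-1}$ is reached from $\mathrm{id}$ by the reversed interpolation $\Phi^\phi_{1-t}\circ(\Phi^\phi_1)^{-1}$, whose potential is $-\alpha^\phi_{1-t}$. Alternatively, generate $\Diff$ from the symmetric neighborhood $\mathcal{O}\cap\mathcal{O}^{-1}$ and avoid inverses altogether.
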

The above result is a statement on the \emph{controllability} of the infinite-dimensional right-invariant control system \eqref{eq:controlsystem}. As such, it is natural to contrast our results with the seminal work of Agrachev and Caponigro \cite{AgrachevCaponigro2009}. A quick glance at the structure of admissible vector fields in \eqref{eq:controlsystem} reveals the key difference: the family of admissible vector fields in \cite{AgrachevCaponigro2009} is closed under multiplication by smooth functions, whereas gradient vector fields are not. Hence, the results in \cite{AgrachevCaponigro2009}, see also \cite{ArguillereTrelat2017}, are not applicable to our setting. 
Another corollary of Theorem \ref{thm:OMT_DIFF} relates to the \emph{holonomy} group of the underlying principal bundle in Otto's geometric framework \cite{otto2001geometry}. It is well-known (see, e.g., \cite[Appendix A.5]{WendtKhesin2009}, and \cite[Part~III, Theorem 2.3.5]{Hamilton1982}) that the map
\begin{align*}
    \pi&:\Diff\rightarrow\Dens, & \pi(\Phi):=\frac{d(\Phi_\sharp \mu_g)}{d\mu_g},
\end{align*}
turns $\Diff$ into a principal bundle over the space of smooth and strictly positive probability densities
\begin{align*}
    \Dens:=\{\rho\in \Cinf ~|~\rho>0,\,\textstyle\int\rho\,d\mu_g = 1\}.
\end{align*}
The structure group of this bundle, acting by right composition, is
\begin{align*}
    \SDiff:=\{\Phi\in\Diff~|~\Phi_\sharp \mu_g = \mu_g\},
\end{align*}
which coincides with the identity component of the group of all measure-preserving diffeomorphisms of $\M$. Otto's construction equips this bundle with a principal connection whose horizontal space at $\Phi\in\Diff$ is
$$
\hor_{\Phi}:=\{\grad\phi\circ\Phi~|~\phi\in\Cinf\}.
$$
In this framework, the parallel transport map along a Wasserstein geodesic is left composition with the associated OMT map. Consequently, parallel transport along a piecewise-geodesic path in $\Dens$ is left composition with the corresponding composition of OMT maps. Theorem \ref{thm:OMT_DIFF} then immediately yields the following.
\begin{corollary}\label{cor:holonomy}
    Every element of $\SDiff$ is the holonomy of a piecewise-geodesic loop in $\Dens$.
\end{corollary}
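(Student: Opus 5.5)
Fix the base point $\rho_0\equiv 1$, i.e.\ $\mu_g$, in $\Dens$, and let $\Psi\in\SDiff$. The plan is to factor $\Psi$ into OMT maps, read off from the factorization a closed piecewise-geodesic loop at $\mu_g$, and identify its horizontal lift through $\mathrm{id}$.

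\emph{Factorization with small potentials.} I first need every factor to have a potential in the regime where McCann's interpolation stays in $\Diff$. Let $G$ be the subgroup of $\Diff$ generated by the maps $\expg(\grad\phi)$ with $\|\phi\|_{C^2}<R$. By \eqref{eq:inverse_OMT}, continuity of $\phi\mapsto\phi^c$ and $0^c=0$, after shrinking the admissible potentials if necessary, the inverse of a generator is again a generator. So the elements of $G$ are exactly the finite compositions of generators. The proof of Theorem~\ref{thm:OMT_DIFF} should produce $\mathcal{O}$ as the image of a neighbourhood of $0$ in $O_r$ under the composition map $\bm{\phi}\mapsto \expg(\grad\phi^r)\circ\cdots\circ\expg(\grad\phi^1)$. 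If so, $\mathcal{O}\subset G$. Since $\Diff$ is connected and $G$ is a subgroup containing an open neighbourhood of $\mathrm{id}$, we get $G=\Diff$. Hence
\[
\Psi=T_k\circ\cdots\circ T_1,\qquad T_i=\expg(\grad\phi_i),\quad \|\phi_i\|_{C^2}<R .
\]
This is the step I expect to be the main obstacle. If the proof of Theorem~\ref{thm:OMT_DIFF} does not deliver small potentials directly, one must check that Corollary~1's factors can be taken in $O_1$ and not merely as arbitrary diffeomorphic OMT maps. Everything else is formal.

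\emph{The loop.} Set $\Phi_0=\mathrm{id}$, $\Phi_i=T_i\circ\Phi_{i-1}$ and $\rho_i=\pi(\Phi_i)$. Since $\phi_i$ is \cconv{}, McCann's theorem says $T_i$ is the optimal map from any absolutely continuous measure, in particular from $\rho_{i-1}\mu_g$ to $\rho_i\mu_g$. Therefore $t\mapsto\pi(\Phi^{\phi_i}_t\circ\Phi_{i-1})$, $t\in[0,1]$, is the Wasserstein geodesic from $\rho_{i-1}$ to $\rho_i$. Because $\Phi^{\phi_i}_t$ is a smooth curve in $\Diff$, this geodesic lies in $\Dens$. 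Concatenating the $k$ segments gives a piecewise-geodesic path from $\rho_0$ to $\rho_k=\pi(\Psi)$. Since $\Psi_\sharp\mu_g=\mu_g$, we have $\rho_k\equiv 1$, so the path is a loop at $\mu_g$.

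\emph{The lift and the holonomy.} On the $i$-th segment the curve $t\mapsto\Phi^{\phi_i}_t\circ\Phi_{i-1}$ has velocity $\grad\alpha^{\phi_i}_t\circ(\Phi^{\phi_i}_t\circ\Phi_{i-1})$, where $\alpha^{\phi_i}_t$ solves \eqref{eq:HJ}. This velocity lies in $\hor$, so the curve is horizontal and projects to the geodesic segment. The segments join continuously at $\Phi_{i-1}$. By uniqueness of horizontal lifts for Otto's connection, the concatenation is the horizontal lift of the loop starting at $\mathrm{id}$, and it ends at $\Phi_k=\Psi$. Thus parallel transport around the loop sends $\mathrm{id}$ to $\Psi=\mathrm{id}\circ\Psi$, so the holonomy is $\Psi$. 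Under the opposite convention it is $\Psi^{-1}$; in that case I run the same argument with $\Psi^{-1}\in\SDiff$ in place of $\Psi$.
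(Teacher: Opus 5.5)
Your proposal is correct and follows the paper's route. You factor $\Psi$ into OMT maps via Theorem~\ref{thm:OMT_DIFF} and the subgroup argument, project the partial compositions to a piecewise-geodesic loop at $\mu_g$ that closes because $\Psi_\sharp\mu_g=\mu_g$, and use that parallel transport along each Wasserstein geodesic is left composition with the corresponding OMT map. The step you flag as the main obstacle is settled by the proof of Theorem~\ref{thm:OMT_DIFF}, which gives $\mathcal{O}\subset\mathrm{End}_{2m}(O_{2m})$ (built around $(\bm{\phi}_\star,\bm{\phi}_\star^c)$ rather than $0$, where the differential degenerates), so every factor has $\|\phi^i\|_{C^2}<R$.
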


\section{A Finite-Dimensional Prelude}\label{sec:prelude}
In this section, we give a preview of the strategy for the proof of Theorem \ref{thm:OMT_DIFF} by considering the finite-dimensional linear setting. Throughout this section, $\GL$ denotes the identity component of the general linear group, $I$ is the identity matrix, $\Sym,\Skew\subset\mathbb{R}^{n\times n}$ are the vector space of symmetric, respectively, skew-symmetric, matrices, and $e^A$ is the standard matrix exponential. 
\begin{proposition}\label{prop:OMT_GL}
    There exists an open neighborhood $\mathcal{O}\subset\GL$ of $I$ such that every $A\in \mathcal{O}$ is the product of, at most, three symmetric positive definite factors.
\end{proposition}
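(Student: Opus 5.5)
Consider the smooth map
\[
F:\Sym^3\rightarrow\GL,\qquad F(S_1,S_2,S_3):=e^{S_1}e^{S_2}e^{S_3},
\]
which satisfies $F(0,0,0)=I$. Each factor $e^{S_i}$ is symmetric positive definite, so it suffices to show that the image of $F$ contains an open neighborhood of $I$. We will get this from the inverse function theorem, but not at the origin: there $dF(0)[\dot S_1,\dot S_2,\dot S_3]=\dot S_1+\dot S_2+\dot S_3$ has image $\Sym$ only and misses $\Skew$. Instead we linearize at a nearby point where the non-commutativity of the factors contributes skew directions, and then use a group translation to move back to $I$. This is the finite-dimensional shadow of the gradient/Lie-bracket mechanism behind Theorem \ref{thm:OMT_DIFF}.

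\emph{Step 1 (choice of base point).} Fix a diagonal matrix $D$ with distinct diagonal entries. Take the base point $(\varepsilon D,0,-\varepsilon D)$, whose image is $e^{\varepsilon D}e^{-\varepsilon D}=I$. The derivative there is
\[
dF[\dot S_1,\dot S_2,\dot S_3]=d\exp_{\varepsilon D}[\dot S_1]\,e^{-\varepsilon D}+e^{\varepsilon D}\dot S_2e^{-\varepsilon D}+e^{\varepsilon D}\,d\exp_{-\varepsilon D}[\dot S_3].
\]
The middle term alone already ranges over $e^{\varepsilon D}\Sym e^{-\varepsilon D}$. Expanding to first order gives
\[
e^{\varepsilon D}Se^{-\varepsilon D}=S+\varepsilon[D,S]+O(\varepsilon^2).
\]
Since $D$ has distinct entries, $S\mapsto[D,S]$ maps the off-diagonal symmetric matrices onto $\Skew$.

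\emph{Step 2 (surjectivity of the derivative).} Take $\dot S_2=S$ and $\dot S_1=T$ with $\dot S_3=0$. Then the derivative equals
\[
T+S+\varepsilon[D,S]+O(\varepsilon)(|S|+|T|).
\]
For fixed $\varepsilon$ this is a linear map $L_\varepsilon$ on $\Sym\times\Sym$. After rescaling its skew part by $\varepsilon^{-1}$, it converges to
\[
(S,T)\mapsto\bigl(\mathrm{sym}(S+T)+O(\varepsilon),\;[D,S]+O(1)\text{ terms}\bigr).
\]
Here one has to be careful: the $O(\varepsilon)$ corrections from $d\exp_{\varepsilon D}[T]$ also produce skew parts of order $\varepsilon$, namely $\tfrac{\varepsilon}{2}[D,T]$ from the expansion $d\exp_X[T]\,e^{-X}=T+\tfrac12[X,T]+\cdots$. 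So the rescaled limit map is
\[
(S,T)\mapsto\bigl(S+T,\;[D,S]+\tfrac12[D,T]\bigr),
\]
which is surjective onto $\Sym\oplus\Skew$. Indeed, choose $T$ so that $S+T$ equals the target symmetric part; the skew equation then becomes $\tfrac12[D,S]=\text{target}-\tfrac12[D,\text{target sym}]$, which is solvable because $[D,\cdot]$ maps $\Sym$ onto $\Skew$. Surjectivity is an open condition, so $dF$ is onto $\mathbb{R}^{n\times n}$ at the base point for some small fixed $\varepsilon>0$.

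\emph{Step 3 (local surjectivity).} By the submersion theorem, $F$ maps a neighborhood of $(\varepsilon D,0,-\varepsilon D)$ onto a neighborhood $\mathcal{O}$ of $F(\varepsilon D,0,-\varepsilon D)=I$. Every $A\in\mathcal{O}$ is therefore a product of three matrices $e^{S_i}$, each symmetric positive definite, which proves Proposition \ref{prop:OMT_GL}.

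The main obstacle is the bookkeeping in Step 2. The base point has to be chosen so that it maps exactly to $I$, which avoids needing a left translation. And one must check that the skew contributions coming from the outer factors do not cancel the one from the middle factor. Factors of the form $e^{\pm\varepsilon D}$ give matching $\tfrac12$ versus $1$ coefficients, so there is no cancellation. If an unexpected cancellation did appear, I would instead take two different diagonal matrices $D_1\neq D_2$ in the outer slots.
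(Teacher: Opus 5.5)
Your proof is correct and takes essentially the same approach as the paper: you linearize away from the origin at a point built from a diagonal $D$ with distinct entries, show that the $\varepsilon$-rescaled differential converges to a map onto $\Sym\oplus\Skew$ because $[D,\cdot]$ maps $\Sym$ onto $\Skew$, and conclude with the submersion theorem. The only difference is bookkeeping: you build the compensating factor $e^{-\varepsilon D}$ into the map and put the twist on the first factor, which gives the $1$ versus $\tfrac12$ coefficients, whereas the paper linearizes $e^{S_1}e^{S_2}$ at $(0,\varepsilon S)$ and right-multiplies by $e^{-\varepsilon S}$ afterwards.
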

\begin{proof}
    We introduce the map
    \begin{align*}
        \mathrm{End}_2&:\Sym\times\Sym\rightarrow \GL, & \mathrm{End}_2(S_1,S_2):= e^{S_1}e^{S_2}.
    \end{align*}
    An elementary computation shows that the differential of $\mathrm{End}_2$ is given by
    \begin{align*}
        d\mathrm{End}_2(S_1,S_2)[Q_1,Q_2] &= \left(\mathcal{A}_{S_1}(Q_1) + e^{S_1}\mathcal{A}_{S_2}(Q_2)e^{-S_1}\right)\mathrm{End}_2(S_1,S_2),
    \end{align*}
    where the operator $\mathcal{A}_S$ is defined by
    \begin{align*}
        \mathcal{A}_S(Q)&:=\int_0^1 e^{t S} Q e^{-t S}\,dt.
    \end{align*}
    Taking $S_1 = 0$, $S_2=\varepsilon S$, for any $\varepsilon>0$ and $S\in\Sym$, the differential simplifies to
    \begin{align*}
        d\mathrm{End}_2(0,\varepsilon S)[Q_1,Q_2] = \left(Q_1+\mathcal{A}_{\varepsilon S}(Q_2)\right)\mathrm{End}_2(0,\varepsilon S).
    \end{align*}
    The linear operator $\mathcal{A}_{\varepsilon S}$ admits the expansion
    \begin{align*}
        \mathcal{A}_{\varepsilon S}(Q) = Q + \frac{1}{2}\varepsilon\,\mathrm{ad}_S(Q) + \varepsilon^2\mathcal{B}_{\varepsilon}(Q),
    \end{align*}
    where $\mathrm{ad}_S:\mathfrak{gl}(n)\rightarrow\mathfrak{gl}(n)$ is the standard \emph{adjoint} operator $\mathrm{ad}_S(Q)=[S,Q]=SQ-QS$ on the Lie algebra $\mathfrak{gl}(n)$ of $\GL$, and $\mathcal{B}_{\varepsilon}$ is a bounded linear operator. This can be shown using \cite[Proposition 3.35]{hall2003lie} and direct integration. Reparametrizing  $Q_2=2\varepsilon^{-1}\tilde{Q}_2$ and $Q_1=\tilde{Q}_1-Q_2$ with $\tilde{Q}_1,\tilde{Q}_2\in\Sym$ and $\varepsilon>0$, we obtain that
    \begin{align*}
        d\mathrm{End}_2(0,\varepsilon S)[Q_1,Q_2] = (\tilde{Q}_1 + \mathrm{ad}_S(\tilde{Q}_2) + 2\varepsilon\mathcal{B}_{\varepsilon}(\tilde{Q}_2))\mathrm{End}_2(0,\varepsilon S),
    \end{align*}
    where we utilized the linearity of $\mathcal{B}_{\varepsilon}$. We claim that there exist $S\in\Sym$ and $\varepsilon_S>0$ such that, for all $\varepsilon\in (0,\varepsilon_S)$, the linear map defined by
    \begin{align*}
        \Sym\times\Sym\ni(\tilde{Q}_1,\tilde{Q}_2)\mapsto \tilde{Q}_1 + \operatorname{ad}_S(\tilde{Q}_2) + 2\varepsilon\mathcal{B}_{\varepsilon}(\tilde{Q}_2)\in\mathfrak{gl}(n)
    \end{align*}
    is surjective. Because $\mathcal{B}_{\varepsilon}$ is uniformly bounded for a fixed $S\in\Sym$, and because surjectivity is stable under small perturbation, the above claim follows from the surjectivity of the linear map
   $(\tilde{Q}_1,\tilde{Q}_2)\mapsto \tilde{Q}_1 + \operatorname{ad}_S(\tilde{Q}_2)$
    for some $S\in\Sym$. Moreover, since $\operatorname{ad}_S(Q)$ is skew-symmetric for any $S,Q\in\Sym$, the statement further reduces to the surjectivity of $\mathrm{ad}_S$ onto $\Skew\subset\mathfrak{gl}(n)$, for some $S\in\Sym$. Because $S$ is symmetric, the adjoint of $\mathrm{ad}_S$, denoted by $\mathrm{ad}_S^*$, is again given by $\mathrm{ad}_S^*(\Omega)=[S,\Omega]$. Hence, $\mathrm{ad}_S\mathrm{ad}_S^*(\Omega) = [S,[S,\Omega]]$. Taking $S = \mathrm{diag}(\lambda_1,\dots,\lambda_n)$, it is not difficult to see that entries of $\mathrm{ad}_S\mathrm{ad}_S^*(\Omega)$ are
    \begin{align*}
        (\mathrm{ad}_S\mathrm{ad}_S^*(\Omega))_{ij} = (\lambda_i-\lambda_j)^2 \Omega_{ij},
    \end{align*}
    so that $\mathrm{ad}_S\mathrm{ad}_S^*(\Omega) = \Lambda\odot \Omega$, where $(\Lambda)_{ij}=(\lambda_i-\lambda_j)^2$ and $\odot$ is the Hadamard product of matrices. In particular, $\mathrm{ad}_S\mathrm{ad}_S^*$ is invertible on $\Skew$ if and only if the eigenvalues $\{\lambda_1,\dots,\lambda_n\}$ are distinct. Picking any $S\in\Sym$ that satisfies this generic condition and fixing any $\varepsilon\in(0,\varepsilon_S)$, the spanning condition follows, and, since $\mathrm{End}_2(0,\varepsilon S)$ is invertible, we obtain that the differential is surjective. Hence, the submersion theorem implies that the image of $\mathrm{End}_2$ contains an open neighborhood of $\mathrm{End}_2(0,\varepsilon S)=e^{\varepsilon S}$ in $\GL$. Through right multiplication by $e^{-\varepsilon S}$, which is a symmetric positive definite matrix, we obtain an open neighborhood $\mathcal{O}$ of $I$ such that every element in $\mathcal{O}$ is of the form $e^{S_1}e^{S_2}e^{-\varepsilon S}$, i.e., a product of three symmetric positive definite factors.
\end{proof}
The proof of Proposition \ref{prop:OMT_GL} contains the essence of the proof of Theorem \ref{thm:OMT_DIFF}. In particular, the trick of relocating to a point whose image is a suitable near-identity point, showing the surjectivity of the differential, and translating back via right multiplication, will carry over mutatis mutandis. Note that, when $n\geq2$, the differential is not surjective at the origin $(0,\dots,0)$, regardless of the number of factors. Indeed, the image under the differential at this point is $\Sym$ only. This degeneracy also carries over to the setting of Theorem~\ref{thm:OMT_DIFF}.

Unlike the finite-dimensional case, however, the group $\Diff$ with the $C^\infty$-topology is a Fr\'echet manifold rather than a Banach one. As such, the classical inverse function theorem is not applicable, and we shall rely instead on the Nash--Moser theorem in the tame category \cite{Hamilton1982}.
\section{Proof of Theorem \ref{thm:OMT_DIFF}}\label{sec:technical}
Consider the map obtained by concatenating the Riemannian exponentials of the gradients of the potentials $\phi^i$, i.e., 
\begin{align*}
    \mathrm{End}_{\ell}:(\phi^1,\dots,\phi^\ell)\mapsto \exp^g(\grad\phi^1)\circ\cdots \circ\exp^g(\grad\phi^\ell).
\end{align*}
For $\bm{\phi}\in\CinfV{\ell}$ and $i\in\{0,\dots,\ell\}$, we also write $\mathrm{End}_{i}(\bm{\phi}):=\exp^g(\grad\phi^1)\circ\cdots\circ\exp^g(\grad\phi^i)$ for the partial compositions, with the convention $\mathrm{End}_{0}(\bm{\phi}):=\mathrm{id}$. Restricted to the open set $O_\ell$ of \eqref{eq:Oell}, the map $\mathrm{End}_{\ell}$ is a composition of {\em optimal transport maps}, and $\mathrm{End}_{\ell}(O_\ell)\subset\Diff$.
Our goal is to show that, for some $\ell\in\mathbb{N}$, $\mathrm{End}_{\ell}(O_\ell)$ contains an open neighborhood of $\mathrm{id}$ in $\Diff$. 
Our strategy mirrors the finite-dimensional approach. First, we will show that there exists an $m\in\mathbb{N}$ and a tuple of potentials $\bm{\phi}_{\star}\in O_m$, such that the differential $d\mathrm{End}_{m}$ admits a tame right inverse in an open neighborhood of $\bm{\phi}_{\star}$ and, in addition, that the tuple of conjugate potentials satisfies $\bm{\phi}_{\star}^c\in O_m$.
The Nash--Moser theorem then implies that the image under $\mathrm{End}_{m}$ of every open neighborhood of $\bm{\phi}_{\star}$ in $O_m$ contains an open neighborhood of $\mathrm{End}_{m}(\bm{\phi}_{\star})$ in $\Diff$. 
Moreover,
\[
\mathrm{End}_{m}(\bm{\phi}_\star)\circ\mathrm{End}_{m}(\bm{\phi}_\star^c)=\mathrm{id},
\]
by \eqref{eq:inverse_OMT}, while at the same time, $(\bm{\phi}_\star,\bm{\phi}_\star^c)\in O_{2m}$, so that $\mathrm{End}_{2m}(O_{2m})$ contains an open neighborhood of the identity, and we achieve our goal with $\ell = 2m$. 
To that end, we have the following lemma, the proof of which is a long but straightforward computation. 

\begin{lemma}
    For any $m\in\mathbb{N}$, any $\bm{\phi}\in O_m$, and any $\bm{\delta}\in\CinfV{m}$,
    \begin{align*}
        d\mathrm{End}_{m}(\bm{\phi})[\bm{\delta}]&=\left(\sum_{i=1}^{m} \grad_{h_i(\bm{\phi})}\big( \delta^i\circ \mathrm{End}_{i}(\bm{\phi})^{-1}\big)\right)\circ \mathrm{End}_{m}(\bm{\phi}),
    \end{align*}
    where $h_i(\bm{\phi})$ are the Riemannian metrics defined by
    \begin{align*}
        h_i(\bm{\phi})&:=(\mathrm{End}_{i}(\bm{\phi}))_*\tilde{g}_{\phi^i}, & \tilde{g}_{\phi^i} &:= \left(\int_0^1 \big(\expg(\tau \grad \phi^i)^*g\big)^{-1}\,d\tau\right)^{-1}.
    \end{align*}
\end{lemma}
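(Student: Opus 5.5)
The plan is to prove the formula in two stages. First, a product rule in the right-trivialized group reduces everything to a single factor. Second, the single-factor derivative is computed by differentiating McCann's displacement interpolation in the potential.

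\emph{Stage 1 (product rule).} For $\Phi=\Phi_1\circ\cdots\circ\Phi_m$ with $\Phi_i=\expg(\grad\phi^i)$, write the right-trivialized derivative of the $i$-th factor in direction $\delta^i$ as
\[
\tfrac{d}{ds}\big|_{s=0}\expg(\grad(\phi^i+s\delta^i))=W_i\circ\Phi_i, \qquad W_i\in\X.
\]
The Leibniz rule for compositions gives
\[
\tfrac{d}{ds}\big|_{s=0}\,\Phi_1\circ\cdots\circ\Phi_i^s\circ\cdots\circ\Phi_m=\bigl((\mathrm{End}_{i-1}(\bm{\phi}))_*W_i\bigr)\circ\mathrm{End}_m(\bm{\phi}),
\]
since $D(\Phi_1\circ\cdots\circ\Phi_{i-1})\circ W_i\circ\Phi_i\circ\cdots\circ\Phi_m$ can be rewritten in this form. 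Summing over $i$ reduces the lemma to the single-factor claim
\[
W_i=(\Phi_i)_*\bigl(\grad_{\tilde g_{\phi^i}}\delta^i\bigr).
\]
Indeed, $(\mathrm{End}_{i-1})_*(\Phi_i)_*=(\mathrm{End}_i)_*$. Applying \eqref{eq:naturality} with $h=\tilde g_{\phi^i}$ and diffeomorphism $\mathrm{End}_i(\bm{\phi})$ then gives exactly $\grad_{h_i(\bm{\phi})}\bigl(\delta^i\circ\mathrm{End}_i(\bm{\phi})^{-1}\bigr)$.

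\emph{Stage 2 (single factor).} Fix $\phi=\phi^i$ and $\delta=\delta^i$, and consider the two-parameter family $\Psi^s_t:=\expg(t\grad(\phi+s\delta))$, $t\in[0,1]$. This is a family of diffeomorphisms for small $s$, because $\phi\in O_1$ and $O_1$ is open. Set $\Phi_t=\Psi^0_t$. Define the Eulerian velocity $u_t=\grad\alpha_t$, where $\alpha_t=\alpha^\phi_t$ solves \eqref{eq:HJ}, and the variation field $w_t:=\partial_s\Psi^s_t|_{s=0}\circ\Phi_t^{-1}$, with $w_0=0$. Differentiating the HJ equation \eqref{eq:HJ} in $s$, the potential variation $\beta_t:=\partial_s\alpha^{\phi+s\delta}_t|_{s=0}$ solves the linear transport equation
\[
\partial_t\beta_t+g(\grad\alpha_t,\grad\beta_t)=0,\qquad \beta_0=\delta,
\]
whose solution is $\beta_t=\delta\circ\Phi_t^{-1}$. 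The standard compatibility identity for two-parameter families of diffeomorphisms (equality of mixed partials, written in right-trivialized form) gives
\[
\partial_t w_t+[u_t,w_t]=\partial_s u_t=\grad\beta_t,
\]
with the bracket as defined in Section~\ref{sec:prelim}. Since $\partial_t\bigl((\Phi_t^{-1})_*w_t\bigr)=(\Phi_t^{-1})_*(\partial_tw_t+[u_t,w_t])$, combining this with \eqref{eq:naturality} yields
\[
\partial_t\bigl((\Phi_t^{-1})_*w_t\bigr)=(\Phi_t^{-1})_*\grad(\delta\circ\Phi_t^{-1})=\grad_{\Phi_t^*g}\delta=(\Phi_t^*g)^{-1}d\delta .
\]
Integrating over $[0,1]$ with $w_0=0$ gives
\[
(\Phi_1^{-1})_*w_1=\Bigl(\int_0^1(\Phi_\tau^*g)^{-1}d\tau\Bigr)d\delta=\grad_{\tilde g_\phi}\delta.
\]
Hence $W_i=w_1=(\Phi_1)_*\grad_{\tilde g_\phi}\delta$, which is the single-factor claim.

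\emph{Main obstacle.} The delicate points lie in Stage 2. The first is fixing the sign convention in the identity $\partial_t w+[u,w]=\partial_s u$. I would verify it in local coordinates directly from $\partial_t\partial_s\Psi=\partial_s\partial_t\Psi$ before relying on it. The second is justifying that $s\mapsto\alpha^{\phi+s\delta}_t$ is smooth, so that differentiating \eqref{eq:HJ} is legitimate. This follows because, for $\|\phi\|_{C^2}<R$, the map $\expg(t\grad\phi)$ is a diffeomorphism depending smoothly on $(t,s)$, and $\alpha_t$ is given explicitly by the method of characteristics, as in \eqref{eq:ctransform} with $1$ replaced by $t$. Finally, one should note that $\tilde g_\phi$ is a genuine Riemannian metric: it is the inverse of an average of positive definite inverse metrics, so it is positive definite.
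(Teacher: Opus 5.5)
Your proposal is correct and follows the paper's proof. It uses the same linearized Hamilton--Jacobi/transport step with $\delta_t=\delta\circ\Phi_t^{-1}$, the same Duhamel-type integral $\int_0^1(\Phi_1\circ\Phi_\tau^{-1})_*\grad\delta_\tau\,d\tau$ converted via \eqref{eq:naturality} into $(\Phi_1)_*\grad_{\tilde g_\phi}\delta$, and the same chain-rule assembly over the factors. The only difference is that you derive the variations formula yourself from the mixed-partials identity $\partial_t w_t+[u_t,w_t]=\partial_s u_t$ (your sign is correct with the paper's bracket convention), where the paper cites it from Agrachev--Sachkov.
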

\begin{proof}
    First, let us compute the differential of the one-factor map
\begin{align*}
    \mathrm{End}:\phi\mapsto \expg(\grad \phi), %
\end{align*}
which is the solution at time $t=1$ of the initial value problem
\begin{align*}
    \dot{\Phi}_t&= \grad \alpha_t \circ \Phi_t, & \Phi_0&=\mathrm{id},
\end{align*}
with $\alpha_t$ solving the Hamilton--Jacobi (HJ) equation \eqref{eq:HJ}.
Perturbing $\phi\in O_1$ to $\phi^{s}=\phi + s\,\delta \in O_1$ for any $\delta\in\Cinf$ and sufficiently small $|s|$, and linearizing the HJ equation, we obtain
\begin{align*}
    \partial_t(\partial_{s}\alpha^{s}_t) + g(\grad\alpha^{s}_t,\grad(\partial_{s}\alpha^{s}_t)) &= 0, & \partial_{s}\alpha^{s}_0=\delta, %
\end{align*}
so that, if $\delta_t = \partial_{s}\alpha^{s}_t|_{{s} = 0}$, then $\delta_t$ solves the transport equation %
\begin{align}\label{eq:HJ_variation}
    \partial_t\delta_t + g(\grad\alpha_t,\grad \delta_t) &= 0, & \delta_0=\delta.
\end{align}
Moreover, since \eqref{eq:HJ_variation} is a transport equation, its solution is given in closed form by $\delta_t = \delta\circ \expg(t \grad \phi)^{-1}$. Applying the non-homogeneous variations formula \cite[Section 2.8]{agrachev2013control}, we obtain that
\begin{align*}
    \partial_{s}|_{{s}=0}(\Phi_1^{{s}}) = \mathcal{A}_{\phi}(\delta)\circ \exp^g(\grad\phi),
\end{align*}
where the operator $\mathcal{A}$ is given by
\begin{align*}
    \mathcal{A}_{\phi}(\delta)
    &:= \int_0^1 \big(\exp^g(\grad \phi)\circ\exp^g(\tau \grad \phi)^{-1}\big)_*\grad\delta_\tau \,d\tau.
\end{align*}
With $\Phi_\tau = \exp^g(\tau \grad \phi)$, we compute that
\begin{align*} %
    \mathcal{A}_{\phi}(\delta)
    &= \int_0^1 \big(\Phi_1\circ\Phi_\tau^{-1}\big)_*\grad\delta_\tau \,d\tau= (\Phi_1)_*\int_0^1 \grad_{\big(\Phi_\tau^{-1}\big)_*g}(\delta_\tau\circ\Phi_\tau) \,d\tau.
\end{align*}
where we utilize the identity \eqref{eq:naturality}. Using $\delta_\tau = \delta\circ \Phi_\tau^{-1}$, we obtain
\begin{align*}
    \mathcal{A}_{\phi}(\delta) &= (\Phi_1)_*\int_0^1 \grad_{\big(\Phi_\tau^{-1}\big)_*g} \delta  \,d\tau =  (\Phi_1)_*\int_0^1 \big(\big(\Phi_\tau^{-1}\big)_*g\big)^{-1} d\delta  \,d\tau\\
    &=  (\Phi_1)_*\left(\left(\int_0^1 \big(\big(\Phi_\tau^{-1}\big)_*g\big)^{-1} \,d\tau\right) d\delta\right),
\end{align*}
and, for a fixed $\phi\in O_1$, $\mathcal{A}_{\phi}$ simplifies to
\begin{align*}
    \mathcal{A}_{\phi}(\delta) &= (\Phi_1)_*\grad_{\tilde{g}_\phi} \delta, & \tilde{g}_\phi &:= \left(\int_0^1 \big((\Phi_\tau)^*g\big)^{-1}\,d\tau\right)^{-1}.
\end{align*}
Next, we apply this to $\mathrm{End}_{m}$ using the chain rule. Specifically, with $\bm{\delta}=(\delta^{1},\dots,\delta^{m})\in\CinfV{m}$ and $\bm{\phi}=(\phi^1,\dots,\phi^m)\in O_m$, we consider
\begin{align*}
    d\mathrm{End}_{m}(\bm{\phi})[\bm{\delta}] = \partial_{s}|_{{s}=0}\left(\Phi_1^{1,{s}}\circ \cdots\circ \Phi_1^{m,{s}}\right), 
\end{align*}
where $\Phi_t^{i,{s}}$ solves 
$
\dot{\Phi}_t^{i,{s}} = \grad\alpha_t^{i,{s}} \circ \Phi_t^{i,{s}},$ for $\Phi_0^{i,{s}}=\mathrm{id}, 
$
with $\alpha_t^{i,{s}}$ solving \eqref{eq:HJ} from the initial datum $\alpha_0^{i,{s}}=\phi^i + {s}\,\delta^{i}$. 
The chain rule gives
\begin{align*}
    d&\mathrm{End}_{m}(\bm{\phi})[\bm{\delta}]\\
    &= \sum_{i=1}^{m}D\left(\Phi_1^{1,{s}}\circ \cdots\circ \Phi_1^{i-1,{s}}\right)\Big|_{{s}=0}\circ\left(\partial_{s}|_{{s}=0} \Phi_1^{i,{s}}\right) \circ \left(\Phi_1^{i+1,{s}}\circ \cdots\circ \Phi_1^{m,{s}}\right)\Big|_{{s}=0}\\ 
    & =\sum_{i=1}^{m}D(\mathrm{End}_{i-1}(\bm{\phi}))\circ\left(\partial_{s}|_{{s}=0} \Phi_1^{i,{s}} \right)\circ \mathrm{End}_{i}(\bm{\phi})^{-1}\circ \mathrm{End}_{m}(\bm{\phi}).
\end{align*}
Using the expression we derived for $\partial_{s}|_{{s} =0}\Phi^{s}_1$, we obtain that
\begin{align*}
    d&\mathrm{End}_{m}(\bm{\phi})[\bm{\delta}]\\
    &= \sum_{i=1}^{m}D(\mathrm{End}_{i-1}(\bm{\phi}))\circ\mathcal{A}_{\phi^i}(\delta^{i})\circ\exp^g(\grad\phi^i)\circ \mathrm{End}_{i}(\bm{\phi})^{-1}\circ \mathrm{End}_{m}(\bm{\phi}).
\end{align*}
Substituting for $\mathcal{A}_{\phi^i}$, we compute that
\begin{align*}
    d\mathrm{End}_{m}(\bm{\phi})[\bm{\delta}] &= \sum_{i=1}^{m}D(\mathrm{End}_{i}(\bm{\phi}))\circ (\grad_{\tilde{g}_{\phi^i}} \delta^i)\circ \mathrm{End}_{i}(\bm{\phi})^{-1}\circ \mathrm{End}_{m}(\bm{\phi}), \\
    &=\left(\sum_{i=1}^{m} \grad_{(\mathrm{End}_{i}(\bm{\phi}))_*\tilde{g}_{\phi^i}}\big( \delta^i\circ \mathrm{End}_{i}(\bm{\phi})^{-1}\big)\right)\circ \mathrm{End}_{m}(\bm{\phi})
\end{align*}
where the last equality utilizes \eqref{eq:naturality} once more.
\end{proof}
From the formula for $d\mathrm{End}_{m}(\bm{\phi})$, we see that the sought-after point $\bm{\phi}_\star$ cannot be taken to be $0$. Indeed,
\begin{align*}
    d\mathrm{End}_{m}(0)[\bm{\delta}] = \grad\Big(\sum_{i=1}^m \delta^i\Big),
\end{align*}
whose image consists of gradient vector fields only.
Taking any $\bm{\phi}\in O_m$, we observe that $d\mathrm{End}_{m}(\bm{\phi})$ can be factored as
\begin{align*}
    d\mathrm{End}_{m}(\bm{\phi}) = \mathcal{R}(\bm{\phi})\circ\mathcal{L}(\bm{\phi})\circ\mathcal{C}(\bm{\phi}), 
\end{align*}
where the families of operators $\mathcal{R}$, $\mathcal{L}$, $\mathcal{C}$ are defined by 
\begin{align*}
    \mathcal{R}&:O_m\times \X\rightarrow T\Diff, & \mathcal{R}(\bm{\phi})[X]&:= X\circ \mathrm{End}_{m}(\bm{\phi})\in T_{\mathrm{End}_{m}(\bm{\phi})}\Diff,\phantom{\sum_{i=1}^{m}}\\
    \mathcal{L}&:O_m\times\CinfV{m}\rightarrow\X, & \mathcal{L}(\bm{\phi})[\bm{\delta}]&:= \sum_{i=1}^{m} \grad_{h_i(\bm{\phi})} \delta^i, \\ \mathcal{C}&:O_m\times\CinfV{m}\rightarrow \CinfV{m}, & \mathcal{C}(\bm{\phi})[\bm{\delta}]&:= \bigl(\delta^1\circ\mathrm{End}_{1}(\bm{\phi})^{-1},\dots,\delta^m\circ\mathrm{End}_{m}(\bm{\phi})^{-1}\bigr).\phantom{\sum_{i=1}^{m}}
\end{align*}
In addition, for all $\bm{\phi}\in O_m$, the operators $\mathcal{R}(\bm{\phi})$ and $\mathcal{C}(\bm{\phi})$ are invertible, with explicit inverses
\begin{align*}
\mathcal{R}^{-1}(\bm{\phi})&:T_{\mathrm{End}_{m}(\bm{\phi})}\Diff \rightarrow \X, & \mathcal{R}^{-1}(\bm{\phi})[\dot{\Phi}]&= \dot{\Phi}\circ \mathrm{End}_{m}(\bm{\phi})^{-1}, \\
     \mathcal{C}^{-1}(\bm{\phi})&:\CinfV{m}\rightarrow\CinfV{m}, & \mathcal{C}^{-1}(\bm{\phi})[\bm{\delta}]&= \bigl(\delta^1\circ\mathrm{End}_{1}(\bm{\phi}), \dots, \delta^m\circ \mathrm{End}_{m}(\bm{\phi})\bigr).
\end{align*}
The tame calculus of Hamilton, i.e.,  that the composition map $(u,\Phi)\mapsto u\circ\Phi$ and the inversion map $\Phi\mapsto\Phi^{-1}$ are smooth tame maps \cite[Part~II, Theorem~2.3.5]{Hamilton1982}, implies that the corresponding families of operators
\begin{align*}
    \mathcal{R}^{-1}&: (\mathrm{End}_{m})^*T\Diff\rightarrow \X, \\ 
    \mathcal{C}^{-1}&:O_m\times\CinfV{m}\rightarrow\CinfV{m},
\end{align*}
are also smooth tame, where 
\begin{align*}
    (\mathrm{End}_{m})^*T\Diff:= \{(\bm{\phi},\dot{\Phi})\in O_m\times T\Diff~|~\dot{\Phi}\in T_{\mathrm{End}_{m}(\bm{\phi})}\Diff\}.
\end{align*}
Therefore, our task further reduces to showing that the family of operators $\mathcal{L}$ admits a tame right-inverse for all $\bm{\phi}$ in an open neighborhood of some suitable $\bm{\phi}_\star\in O_m$.
The existence of such a right inverse, denoted by $\mathcal{L}^\dagger$ below, is the subject of the following result, which also invokes the tame calculus of Hamilton \cite[Part~II, Section 3.3]{Hamilton1982}.
\begin{proposition}\label{prop:tame_inverse}
    Suppose that, for some $m\in\mathbb{N}$ and $\bm{\phi}_\star\in O_m$, the operator 
    $$
    \mathcal{L}(\bm{\phi}_\star)\circ \mathcal{L}^*(\bm{\phi}_\star):H^{s+2}(T\M)\rightarrow H^{s}(T\M),
    $$
    is elliptic and invertible for all $s\geq 0$, where $\mathcal{L}^*(\bm{\phi}_\star)$ denotes the adjoint of $\mathcal{L}(\bm{\phi}_\star)$ with respect to the $L^2(\mu_g)$ pairing. Then, there exists an open neighborhood $\mathcal{U}\subset O_m$ of $\bm{\phi}_\star$ such that, for all $\bm{\phi}\in \mathcal{U}$,
    $
    \mathcal{L}(\bm{\phi})\circ \mathcal{L}^*(\bm{\phi})
    $
    is also elliptic and invertible between the same spaces. Moreover, the family of operators 
    \begin{align*}
        \mathcal{L}^\dagger&:\mathcal{U}\times\X\rightarrow\CinfV{m}, & \mathcal{L}^\dagger(\bm{\phi})[X]&:= \mathcal{L}^*(\bm{\phi})\circ (\mathcal{L}(\bm{\phi})\circ \mathcal{L}^*(\bm{\phi}))^{-1}[X],
    \end{align*}
    is smooth tame, and, for all $\bm{\phi}\in \mathcal{U}$, 
    $$
    \mathcal{L}(\bm{\phi})\circ\mathcal{L}^\dagger(\bm{\phi}) = \mathrm{Id}_{\X}.
    $$
\end{proposition}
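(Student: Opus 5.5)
The plan is to treat $\mathcal{L}(\bm{\phi})\mathcal{L}^*(\bm{\phi})$ as a smooth tame family of second-order linear differential operators on $T\M$. Its coefficients depend smoothly and tamely on $\bm{\phi}$. We then apply Hamilton's theorem on smooth tame families of invertible elliptic operators \cite[Part~II, Section 3.3]{Hamilton1982}.

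\textbf{Step 1: explicit form.} First write $\mathcal{L}$ and its adjoint explicitly. Since $\grad_{h_i}\delta = P_{h_i}\grad\delta$ with $P_{h_i}=h_i^{-1}\circ g$, we have $\mathcal{L}(\bm{\phi})[\bm{\delta}]=\sum_i P_{h_i(\bm{\phi})}\grad\delta^i$. Because each $P_{h_i}$ is $g$-self-adjoint, the formal adjoint is
\begin{align*}
\mathcal{L}^*(\bm{\phi})[X]=\bigl(-\Div(P_{h_1(\bm{\phi})}X),\dots,-\Div(P_{h_m(\bm{\phi})}X)\bigr).
\end{align*}
Hence $\mathcal{L}\mathcal{L}^*[X]=-\sum_i P_{h_i}\grad\Div(P_{h_i}X)$ is a second-order differential operator. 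Its coefficients are built from $P_{h_i(\bm{\phi})}$ and their first two derivatives.

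Next check that $\bm{\phi}\mapsto P_{h_i(\bm{\phi})}$ is smooth tame $O_m\to C^\infty(\M,\mathrm{End}(T\M))$. By definition, $\tilde g_{\phi}$ is obtained from $D\expg(\tau\grad\phi)$ by pullback, pointwise inversion and integration in $\tau$, all of which are smooth tame operations. Pushforward by $\mathrm{End}_i(\bm{\phi})$ involves composition, inversion and $D$ of diffeomorphisms, which are tame by \cite[Part~II, Theorem~2.3.5]{Hamilton1982}. It follows that the coefficient map, and hence the operator family $(\bm{\phi},X)\mapsto\mathcal{L}\mathcal{L}^*(\bm{\phi})[X]$, is smooth tame. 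It is also linear in $X$.

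\textbf{Step 2: openness of ellipticity and invertibility.} Ellipticity is a pointwise condition on the principal symbol,
\begin{align*}
\sigma_{\bm{\phi}}(\xi)=\sum_i P_{h_i}\,\xi^\sharp\otimes (P_{h_i}\xi^\sharp)^\flat .
\end{align*}
It asks that $\sigma_{\bm{\phi}}(\xi)$ be invertible for $\xi\neq0$. By compactness of the unit cosphere bundle, this persists under $C^0$-small perturbations of the $P_{h_i}$, hence in a $C^\infty$-neighborhood of $\bm{\phi}_\star$.

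Invertibility is more delicate. For $\bm{\phi}$ near $\bm{\phi}_\star$, the operator $\mathcal{L}\mathcal{L}^*(\bm{\phi}):H^{2}\to H^0$ is close in operator norm to $\mathcal{L}\mathcal{L}^*(\bm{\phi}_\star)$, because the coefficients are $C^2$-close. Invertibility between Banach spaces is open, so $\mathcal{L}\mathcal{L}^*(\bm{\phi})$ is invertible $H^2\to H^0$. Elliptic regularity then upgrades this to all $s$. Injectivity on $H^{s+2}$ follows from injectivity on $H^2$. For surjectivity, an $H^0$-preimage of $Y\in H^s$ lies in $H^{s+2}$.

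An alternative to operator-norm closeness is to use the self-adjointness and nonnegativity of $\mathcal{L}\mathcal{L}^*$. Then invertibility is equivalent to $\ker\mathcal{L}^*(\bm{\phi})=0$, which can be propagated using a Gårding-type lower bound $\|\mathcal{L}^*X\|_{L^2}\geq c\|X\|_{H^1}$ that is stable under perturbation.

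\textbf{Step 3: tameness of the inverse family.} Invoke Hamilton's result that the inverse of a smooth tame family of invertible linear elliptic operators is a smooth tame family \cite[Part~II, Theorem 3.3.3]{Hamilton1982}. This is the step I expect to be the main obstacle, since the cited theorem has hypotheses that must be matched. The proof rests on uniform tame a priori estimates
\begin{align*}
\|X\|_{H^{s+2}}\leq C_s\bigl(\|\mathcal{L}\mathcal{L}^*X\|_{H^s}+\|\bm{\phi}\|_{C^{s+k}}\|X\|_{H^{0}}\bigr),
\end{align*}
valid uniformly on $\mathcal{U}$. These follow from standard elliptic estimates whose constants depend tamely on the coefficients. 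Smoothness of the inverse then comes from differentiating $V\mapsto V^{-1}$, whose derivative is $-V^{-1}(dV)V^{-1}$, and iterating.

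Composition with the smooth tame differential operator $\mathcal{L}^*$ yields that $\mathcal{L}^\dagger$ is smooth tame. Finally, $\mathcal{L}\mathcal{L}^\dagger=\mathcal{L}\mathcal{L}^*(\mathcal{L}\mathcal{L}^*)^{-1}=\mathrm{Id}_{\X}$ holds by construction on smooth vector fields. Here we use that, by the regularity of Step 2, the inverse maps $\X$ into $\X$.
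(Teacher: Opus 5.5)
Your proposal is correct and follows essentially the same route as the paper. Both arguments compute $\mathcal{L}^*(\bm{\phi})[X]=-\bigl(\Div(P_i(\bm{\phi})X)\bigr)_i$, note that the map sending $\bm{\phi}$ to the coefficients of $\mathcal{L}(\bm{\phi})\circ\mathcal{L}^*(\bm{\phi})$ is smooth tame, invoke Hamilton's theorem that the inverses of a smooth tame family of invertible elliptic operators form a smooth tame family, and compose with $\mathcal{L}^*$. The one difference is that you verify openness of the elliptic-and-invertible condition by hand (via compactness of the cosphere bundle, openness of invertibility $H^2\to H^0$, and elliptic regularity), whereas the paper takes $\mathcal{U}=f^{-1}(U)$, with $U$ Hamilton's open set of elliptic invertible coefficients, and cites Part~II, Theorem~3.3.1 for the tameness of the inverse.
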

\begin{proof}
    For all $\bm{\phi}\in O_m$, with $P_i(\bm{\phi}):=P_{h_i(\bm{\phi})}=h_i(\bm{\phi})^{-1}\circ g$, the adjoint $\mathcal{L}^*(\bm{\phi})$ with respect to the $L^{2}(\mu_g)$ pairing is 
\begin{align*}
    \mathcal{L}^*(\bm{\phi})[X]= -\bigl(\Div(P_1(\bm{\phi})X),\dots,\Div(P_m(\bm{\phi})X)\bigr).
\end{align*}
Therefore, the operator $\mathcal{L}(\bm{\phi})\circ\mathcal{L}^*(\bm{\phi})$ is given by
\begin{align*}
    \mathcal{L}(\bm{\phi})\circ\mathcal{L}^*(\bm{\phi})[X] &= -\sum_{i=1}^m \grad_{h_i(\bm{\phi})}\bigl(\Div(P_i(\bm{\phi})X)\bigr)= - \sum_{i=1}^m P_i(\bm{\phi})\,\grad \circ\Div (P_i(\bm{\phi}) X),
\end{align*}
which is a second-order differential operator. Let $D^2(T\M,T\M)$ denote the bundle of linear partial differential operators of degree at most $2$ from $T\M$ to $T\M$ and let $\mathbb{L}$ be the generic differential operator map defined in \cite[Part~II, Section 3.3]{Hamilton1982}. Then, observe that the map $f$ defined by
\begin{align*}
    f&: O_m\rightarrow C^\infty(\M,D^2(T\M,T\M)), & \mathbb{L}(f(\bm{\phi}))&:=\mathcal{L}(\bm{\phi})\circ\mathcal{L}^*(\bm{\phi})
\end{align*}
takes a tuple $\bm{\phi}\in O_m$ into the \emph{coefficients} of a second-order differential operator. In addition, all the operations involved in defining the map $f$ are smooth tame maps, i.e., composition, inversion, Riemannian exponentiation, integration in $\tau$, and a finite number of differentiations. Therefore, $f$ itself is smooth tame. Now, let $U\subseteq C^\infty(\M,D^2(T\M,T\M))$ denote the open set of coefficients for which the corresponding operator is elliptic and invertible \cite[Part~II, Section 3.3]{Hamilton1982}. By assumption, we have $f(\bm{\phi}_\star)\in\mathcal{U}$, and, since $f$ is continuous, $\mathcal{U}:=f^{-1}(U)$ is a non-empty open neighborhood of $\bm{\phi}_\star$ contained in $O_m$ on which $\mathcal{L}(\bm{\phi})\circ\mathcal{L}^*(\bm{\phi})$ is elliptic and invertible. Moreover, by \cite[Part~II, Theorem 3.3.1]{Hamilton1982}, the solution map $\bm{\phi}\mapsto(\mathcal{L}(\bm{\phi})\circ\mathcal{L}^*(\bm{\phi}))^{-1}$ is smooth tame on $\mathcal{U}$, and, since $\mathcal{L}^*$ is a smooth tame family of first-order operators, $\mathcal{L}^\dagger$ is also smooth tame. The conclusion is immediate.
\end{proof}

Thanks to Proposition \ref{prop:tame_inverse}, our task further reduces to finding a single point $\bm{\phi}_\star$ that meets the conditions in its statement. The following result shows that, indeed, such conditions can be met.
\begin{proposition}\label{prop:ellipticity}
    Let $\beta^i:\M\rightarrow\mathbb{R}$, for $i\in\{1,\dots,N\}$, be the component functions of the minimal smooth immersion of $\M$. Set $m=1+2N + \tbinom{N+1}{2}$ and define $\psi^1=0$, and $\psi^{i}$, for $i\in\{2,\ldots,m\}$, as follows:
    \begin{align*}
        \psi^{i+1}&:=\beta^i, \mbox{ for }1\leq i\leq N\\
        \psi^{i+N+1}&:= (\beta^{i})^3, \mbox{ for }1\leq i\leq N\\
\psi^{2N+j+1}&:=\beta^{\iota(j)}, \mbox{ for }1\leq j\leq \tbinom{N+1}{2}
    \end{align*}
 with $\iota$ being any bijection between $\{1,\dots,\tbinom{N+1}{2}\}$ and $\{(k,l)~|~1\leq k\leq l\leq N\}$, and $\beta^{(k,l)} := \beta^k\beta^l$. Then, there exists some $\varepsilon > 0$ such that, with $\bm{\phi}_\star = \varepsilon \bm{\psi}$, we have that $\bm{\phi}_\star,\bm{\phi}_\star^c\in O_m$, and the linear operator
 $$
 \mathcal{L}(\bm{\phi}_\star)\circ\mathcal{L}^*(\bm{\phi}_\star):H^{s+2}(T\M)\rightarrow H^s(T\M),
 $$ 
 is elliptic and invertible for any $s\geq 0$. 
\end{proposition}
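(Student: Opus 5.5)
The plan is to expand all the data at $\bm{\phi}_\star=\varepsilon\bm{\psi}$ to first order in $\varepsilon$. Then reduce both ellipticity and invertibility of $\mathcal{L}\mathcal{L}^*$ to properties of a fixed, $\varepsilon$-independent first-order operator built from $\Div$ and the Hessians of the $\psi^i$, and conclude by a perturbation argument.

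\textbf{Step 0: membership in $O_m$.} The condition $\bm{\phi}_\star,\bm{\phi}_\star^c\in O_m$ follows for small $\varepsilon$ because $\|\varepsilon\psi^i\|_{C^2}\to0$, the map $\phi\mapsto\phi^c$ is $C^\infty$-continuous, and $0^c=0$.

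\textbf{Step 1: expansion of the metrics.} We have $\expg(\tau\grad\phi)^*g=g+2\tau\varepsilon\Hess\psi+O(\varepsilon^2)$ by \eqref{eq:killing}. Integrating the inverse in $\tau$ gives $P_{\tilde g_{\varepsilon\psi}}=I-\varepsilon g^{-1}\Hess\psi+O(\varepsilon^2)$. Pushing forward by $\mathrm{End}_i(\bm{\phi}_\star)$, which is the time-one flow of $\varepsilon\grad\sum_{j\le i}\psi^j$ up to $O(\varepsilon^2)$, changes the metric by a Lie derivative, again controlled by \eqref{eq:killing}. This yields
\begin{align*}
P_i(\bm{\phi}_\star)=I+\varepsilon S_i+O(\varepsilon^2),
\end{align*}
where $S_i$ is $g^{-1}$ applied to an explicit lower-triangular combination $\sum_{j\le i}c_{ij}\Hess\psi^j$ with $c_{ii}\neq0$. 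In particular $P_1=I$, since $\psi^1=0$ and $\mathrm{End}_1=\mathrm{id}$. Because the coefficient matrix is invertible, a fixed invertible linear recombination $\Lambda_\varepsilon$ of the components of $\mathcal{L}^*(\bm{\phi}_\star)X=-(\Div(P_iX))_i$, namely $\Div X$ and $\varepsilon^{-1}$ times the differences, gives
\begin{align*}
\Lambda_\varepsilon\mathcal{L}^*(\bm{\phi}_\star)=T_0+O(\varepsilon),\qquad T_0X:=\bigl(\Div X,\ \Div(g^{-1}\Hess\psi^j\,X)\bigr)_{j=2}^m .
\end{align*}

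\textbf{Step 2: reduction to $T_0$.} The principal symbol of $\mathcal{L}\mathcal{L}^*$ is $v\mapsto\sum_i (P_i\xi^\sharp)\,g(P_i\xi^\sharp,v)$. It is therefore elliptic iff $\{P_i\xi^\sharp\}$ spans $T_x\M$ for every $\xi\neq0$, which is equivalent to $T_0$ having injective principal symbol. Moreover $\langle\mathcal{L}\mathcal{L}^*X,X\rangle=\|\mathcal{L}^*X\|_{L^2}^2$, and $\mathcal{L}\mathcal{L}^*$ is self-adjoint and elliptic, hence Fredholm of index zero. So invertibility on every $H^{s+2}$ (using elliptic regularity) is equivalent to $\ker\mathcal{L}^*(\bm{\phi}_\star)=0$.

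If $T_0$ has injective symbol and trivial kernel, it satisfies $\|X\|_{H^1}\le C\|T_0X\|_{L^2}$. This estimate survives the $O(\varepsilon)$ perturbation, since the $O(\varepsilon^2)$ remainders are bounded in $C^k$ uniformly. Hence $\mathcal{L}^*(\bm{\phi}_\star)$ is injective with injective symbol for small $\varepsilon$, which gives both conclusions.

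\textbf{Step 3: properties of $T_0$ (main obstacle).} Here the immersion enters. For the symbol, I would use the identity
\begin{align*}
\Hess(\beta^k\beta^l)-\beta^k\Hess\beta^l-\beta^l\Hess\beta^k=d\beta^k\otimes d\beta^l+d\beta^l\otimes d\beta^k .
\end{align*}
Since the $d\beta^k$ span $T^*_x\M$, the pointwise span of $\{g,\Hess\psi^j\}$ over smooth functions is all of $\mathrm{Sym}^2T^*_x\M$, and injectivity of the symbol $\xi\mapsto(g(\xi^\sharp,v),(\Hess\psi^j)(\xi^\sharp,v))$ follows.

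The triviality of the kernel is the step I expect to be hardest, because $\Div$ does not commute with multiplication by functions: $\Div(fAX)=f\Div(AX)+df(AX)$. My approach is as follows. Take $X$ with $\Div X=0$ and $\Div(g^{-1}\Hess\psi^jX)=0$ for all $j$. Feed the above identity and its cubic analogue
\begin{align*}
\Hess(\beta^k)^3=3(\beta^k)^2\Hess\beta^k+6\beta^k\,d\beta^k\otimes d\beta^k
\end{align*}
into these relations. The leftover first-order terms $df(AX)$ should then turn into zeroth-order algebraic identities of the form $\Hess\beta^l(\grad\beta^k,X)=\dots$, and comparing the $\beta$, $\beta^3$, and $\beta^k\beta^l$ relations should force $g(\grad\beta^k,X)\equiv0$ for all $k$, hence $X=0$ by the immersion property. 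If this pointwise elimination does not close, the fallback is a unique-continuation or energy argument on the overdetermined system $T_0X=0$.
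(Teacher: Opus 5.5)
\textbf{Overall.} Steps 0--2 and the symbol half of Step 3 are sound and match the paper:
\begin{itemize}
\item the first-order expansion of $P_i$;
\item the recombination into the operator the paper calls $\mathcal{T}_0^*$;
\item the reduction of invertibility to $\ker\mathcal{L}^*=\{0\}$ via self-adjointness and Fredholmness;
\item the product-rule argument for the spanning condition.
\end{itemize}
Transferring injectivity from $T_0$ to $\Lambda_\varepsilon\mathcal{L}^*$ through the coercive estimate $\|X\|_{H^1}\le C\|T_0X\|_{L^2}$ is a valid, slightly more direct replacement for the paper's appeal to Kato's semicontinuity of the nullity.

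\textbf{The gap.} You never prove $\ker T_0=\{0\}$, which is the step with real content. You only say the elimination ``should'' close and name a vague fallback. The mechanism you describe is also off. After using the $\beta^k,\beta^l$ rows, the $\beta^k\beta^l$ rows still contain genuinely first-order terms, e.g.\ $\Div\bigl(X(\beta^k)\grad\beta^l+X(\beta^l)\grad\beta^k\bigr)$, so on their own they do not give zeroth-order identities.

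The paper does this step differently. It shows that any $X\in\ker\mathcal{T}_0^*$ is $L^2$-orthogonal to all brackets $[\grad\psi^i,\grad\chi]$, using $[\grad\psi,\grad\chi]=\grad\bigl(g(\grad\psi,\grad\chi)\bigr)-2g^{-1}\Hess\psi\,\grad\chi$ and integration by parts. It then imports from earlier work the fact that these brackets span $\X$ when the family contains $\beta^i,(\beta^i)^2,(\beta^i)^3$.

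\textbf{How to close it along your route.} Your pointwise approach can be completed cheaply, which would make the argument self-contained. Fix $k$ and put $b=\beta^k$, $A=g^{-1}\Hess b$, and $A_2=g^{-1}\Hess(b^2)=2bA+2E$ with $EX=X(b)\grad b$. Then $g^{-1}\Hess(b^3)=3b^2A+6bE=3bA_2-3b^2A$. Using $\Div(fY)=f\Div Y+g(\grad f,Y)$,
\[
\Div\bigl(g^{-1}\Hess(b^3)X\bigr)=3b\Div(A_2X)-3b^2\Div(AX)+3g\bigl(\grad b,(A_2-2bA)X\bigr),
\]
and the last term equals $6X(b)|\grad b|_g^2$.

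On $\ker T_0$, the rows for $\beta^k$, $(\beta^k)^2=\beta^{(k,k)}$ and $(\beta^k)^3$ therefore force $X(\beta^k)|\grad\beta^k|_g^2=0$. Hence $g(X,\grad\beta^k)\equiv0$, trivially so where $\grad\beta^k=0$. Since the $d\beta^k$ span $T_x^*\M$, this gives $X=0$. Neither $\Div X=0$ nor the mixed products are needed here; they matter only for ellipticity.

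Without some such computation written out, your proposal establishes ellipticity but not invertibility.
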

\begin{proof}
Throughout, let $\bm{\phi} = \varepsilon\, \bm{\psi} = (\varepsilon \,\psi^1,\dots,\varepsilon \,\psi^m)$ and note that there exists $\varepsilon_0>0$ such that, for all $\varepsilon\in(0,\varepsilon_0)$, we have that $\bm{\phi},\bm{\phi}^c\in O_m$.
From the expression of $\mathcal{L}(\bm{\phi})\circ\mathcal{L}^*(\bm{\phi})$ in the proof of Proposition \ref{prop:tame_inverse}, its principal symbol is
\begin{align*}
    \sigma(x,\xi) = \sum_{i=1}^m P_i(\bm{\phi})_x(\xi^\sharp \otimes \xi)P_i(\bm{\phi})_x = \sum_{i=1}^m (P_i(\bm{\phi})_x\xi^\sharp) \otimes (P_i(\bm{\phi})_x\xi^\sharp)^\flat,
\end{align*}
where the second equality utilizes the fact that $P_i(\bm{\phi})$ is self-adjoint with respect to $g$.
We observe that $\sigma(x,\xi)$ is a summation of type-$(1,1)$ tensors, each of which is self-adjoint and positive semi-definite with respect to $g$. Therefore, the symbol is invertible if and only if it is positive definite with respect to $g$, which is true if and only if 
\begin{align*}
    \mathrm{span}_{\mathbb{R}}\{P_1(\bm{\phi})_x\xi^\sharp,\dots,P_m(\bm{\phi})_x\xi^\sharp\} = T_x\M,
\end{align*}
for all $\xi\in T^*_x\M\backslash\{0\}$ and every $x\in\M$. Recall that the endomorphisms $P_i(\bm{\phi})$ are given by
\begin{align*}
    P_i(\bm{\phi}) &= h_i(\bm{\phi})^{-1}\circ g = \left((\mathrm{End}_{i}(\bm{\phi}))_*\tilde{g}_{\phi^i}\right)^{-1} \circ g, & \tilde{g}_{\phi^i}= \left(\int_0^1 \big((\Phi^i_\tau)^*g\big)^{-1}\,d\tau\right)^{-1},
\end{align*}
with $\Phi^i_t = \exp^g(t \grad \phi^i)$.
Since $\phi^i = \varepsilon\psi^i$, the family
$$
\varepsilon\mapsto\Phi^{i}_\tau = \exp^g(\varepsilon\,\tau\grad\psi^i),
$$
for all $\tau\in[0,1]$, is a smooth family of diffeomorphisms for all $\varepsilon\in[0,\varepsilon_0)$,  with $\Phi^i_\tau|_{\varepsilon=0}=\mathrm{id}$ and
$\partial_\varepsilon\big|_{\varepsilon=0}\Phi^i_\tau=\tau\grad\psi^i$.
Hence, for all $\tau\in[0,1]$,
\begin{align*}
    &(\Phi^i_\tau)^*g= g + \varepsilon \,\tau \,\mathfrak{L}_{\grad \psi^i} g + O(\varepsilon^2) 
    \implies((\Phi^i_\tau)^*g)^{-1}= g^{-1} - \varepsilon\,\tau\,g^{-1}(\mathfrak{L}_{\grad \psi^i} g)\, g^{-1} + O(\varepsilon^2)\\  &\implies \tilde{g}_{\phi^i}^{-1} = g^{-1} - \varepsilon\,g^{-1}(\mathfrak{L}_{\grad \psi^i} g)\, g^{-1}{\textstyle\int_0^1}\tau\,d\tau + O(\varepsilon^2)\implies \tilde{g}_{\phi^i}= g + \frac{1}{2}\varepsilon \,\mathfrak{L}_{\grad \psi^i} g + O(\varepsilon^2).
\end{align*}
where, since $\M$ is compact and $\bm{\psi}$ is fixed, the remainders are $O(\varepsilon^2)$ in the sense of Section \ref{sec:prelim}, and where, by \eqref{eq:killing}, $\mathfrak{L}_{\grad\psi^i}\,g=2\Hess\psi^i$.
Going back to the endomorphisms $P_i(\bm{\phi})$, we compute that
\begin{align*}
    &(\mathrm{End}_{i}(\bm{\phi}))_*\tilde{g}_{\phi^i} = g +  \varepsilon \Hess \psi^i - 2\varepsilon \sum_{j=1}^i \Hess\psi^j  + O(\varepsilon^2) \\
    &\implies P_i(\bm{\phi}) = I + \varepsilon g^{-1} \Hess \psi^i + 2\varepsilon \sum_{j=1}^{i-1}  g^{-1} \Hess\psi^j + O(\varepsilon^2)
\end{align*}
Since $\psi^1=0$, we have that $P_1(\bm{\phi}) = I$. Now observe that, 
\begin{align*}
    \mathrm{span}_{\mathbb{R}}\{P_1(\bm{\phi})_x\xi^\sharp,\dots,P_m(\bm{\phi})_x\xi^\sharp\} = \mathrm{span}_{\mathbb{R}}\{{Q}_1(\bm{\phi})_x\xi^\sharp,\dots,{Q}_m(\bm{\phi})_x\xi^\sharp\},
\end{align*}
where, for any $\varepsilon> 0$,  ${Q}_{i}(\bm{\phi})$ are defined by ${Q}_1(\bm{\phi})=P_1(\bm{\phi})$, and
$$
{Q}_{i}(\bm{\phi})=\varepsilon^{-1}\left(P_i(\bm{\phi})+(-1)^{i-1}P_1(\bm{\phi}) +2\sum_{j=2}^{i-1}(-1)^{i-j} P_j(\bm{\phi}) \right),
$$ 
for all $i\in\{2,\dots,m\}$. Direct computation shows that 
$
{Q}_i(\bm{\phi})= g^{-1}\Hess \psi^{i}+ O(\varepsilon).
$
Hence, if
\begin{align}\label{eq:zero_order_span}
    \mathrm{span}_{\mathbb{R}}\{\xi^\sharp,(g^{-1}\Hess\psi^2)_x\xi^\sharp,\dots,(g^{-1}\Hess\psi^{m})_x\xi^\sharp\}=T_x\M,
\end{align}
holds for all $x\in\M$ and all $\xi\in T_x^*\M\backslash\{0\}$, ellipticity for all sufficiently small $\varepsilon>0$ will follow. We then have the following assertion.
\begin{claim*}
    The spanning condition \eqref{eq:zero_order_span} is satisfied by the family $\{\psi^2,\dots,\psi^m\}$.
\end{claim*}
\begin{proof}
    First note that, for all $i\in\{1,\dots,\tbinom{N+1}{2}\}$, the Leibniz rule applied to the Hessian gives
\begin{align*}
    g^{-1}\Hess \psi^{2N+i+1} = g^{-1}\Hess (\beta^k\beta^l) &= \beta^l\, g^{-1}\Hess \beta^k + \beta^k\, g^{-1}\Hess \beta^l + g^{-1}(d\beta^l\otimes d\beta^k + d\beta^k\otimes d\beta^l),
\end{align*}
where $(k,l)=\iota(i)$. The first two terms always belong to the pointwise linear span of $g^{-1}\Hess\psi^{i+1}$ for $i\in\{1,\dots,N\}$, and so it suffices to satisfy the span condition
\begin{align*}
    \mathbb{R}\,\xi^\sharp +  \mathrm{span}_{\mathbb{R}}\{g^{-1}(d\beta^l\otimes d\beta^k + d\beta^k\otimes d\beta^l)_x\xi^\sharp~|~1\leq k\leq l \leq N\} = T_x\M,
\end{align*}
for all $x\in\M$ and all $\xi\in T_x^*\M\backslash\{0\}.$ Since $(\beta^1,\dots,\beta^N)$ is an immersion, the differentials $\{d\beta^1,\dots,d\beta^N\}$ span the cotangent bundle pointwise everywhere and, therefore, the set of symmetric $(0,2)$-tensor fields 
$$
\{d\beta^l\otimes d\beta^k + d\beta^k\otimes d\beta^l ~|~1\leq k\leq l \leq N\}
$$
spans the set of all symmetric $(0,2)$-tensor fields on $\M$ pointwise, i.e., 
\begin{align*}
    \mathrm{span}_{\mathbb{R}}\{(d\beta^l\otimes d\beta^k + d\beta^k\otimes d\beta^l)_x~|~1\leq k\leq l \leq N\} = \mathrm{Sym}^2(T_x^*\M),
\end{align*}
for all $x\in \M$. For any $v\in T_x\M$, define the symmetric $(0,2)$-tensor $S\in\mathrm{Sym}^2(T_x^*\M)$ by
\begin{align*}
    S:= \xi\otimes v^\flat +v^\flat\otimes\xi,
\end{align*}
which, as discussed above, belongs to the pointwise span of the family. We proceed to compute that
\begin{align*}
    (g_x^{-1}S)\xi^\sharp = g_x(v,\xi^\sharp)\xi^\sharp+|\xi|_g^2v \implies v =  \frac{1}{|\xi|_g^2}((g_x^{-1}S)\xi^\sharp - g_x(v,\xi^\sharp)\xi^\sharp).
\end{align*}
which, since $\xi\neq 0$, is well-defined. Therefore, 
\begin{align*}
    v\in \mathbb{R}\,\xi^\sharp +  \mathrm{span}_{\mathbb{R}}\{g^{-1}(d\beta^l\otimes d\beta^k + d\beta^k\otimes d\beta^l)_x\xi^\sharp~|~1\leq k\leq l \leq N\},
\end{align*}
and, since $v\in T_x\M$ was arbitrary, the statement of the claim follows.
\end{proof} 
Because the spanning condition \eqref{eq:zero_order_span} is an open property, an $O(\varepsilon)$ perturbation does not destroy it. Therefore, by homogeneity in $\xi$, there exists $\varepsilon_1\in(0,\varepsilon_0)$ such that, for all $\varepsilon\in(0,\varepsilon_{1})$, we have that 
\begin{align*}
    \mathrm{span}_{\mathbb{R}}\{P_1(\bm{\phi})_x\xi^\sharp,\dots,P_m(\bm{\phi})_x\xi^\sharp\} = \mathrm{span}_{\mathbb{R}}\{{Q}_1(\bm{\phi})_x\xi^\sharp,\dots,{Q}_m(\bm{\phi})_x\xi^\sharp\} = T_x\M,
\end{align*}
for all $\xi\in T^*_x\M\backslash\{0\}$ and every $x\in\M$, which implies that the principal symbol $\sigma(x,\xi)$ is invertible. Hence, for all $\varepsilon\in (0,\varepsilon_{1})$, the operator $\mathcal{L}(\bm{\phi})\circ \mathcal{L}^*(\bm{\phi})$ is elliptic.
Because the manifold is compact and without boundary, it follows that $\mathcal{L}(\bm{\phi})\circ \mathcal{L}^*(\bm{\phi})$ is Fredholm, i.e., has a finite dimensional kernel and co-kernel. In addition, because $\mathcal{L}(\bm{\phi})\circ \mathcal{L}^*(\bm{\phi})$ is self-adjoint, its kernel and co-kernel have the same dimension, so, to show invertibility, it suffices to show that its kernel is trivial. Now observe that, since $\mathcal{L}(\bm{\phi})\circ \mathcal{L}^*(\bm{\phi})$ is non-negative and self-adjoint, $\ker \mathcal{L}(\bm{\phi})\circ \mathcal{L}^*(\bm{\phi})=\ker\mathcal{L}^*(\bm{\phi})$. Moreover, we have that
\begin{align*}
    \ker\mathcal{L}^*(\bm{\phi})=\{X\in \X~|~\Div(P_i(\bm{\phi}) X) = 0,\,\forall i\in\{1,\dots,m\}\}.
\end{align*}
Utilizing linearity of the divergence operator, we re-write the conditions once more in terms of the endomorphisms ${Q}_i$, obtaining that, as long as $\varepsilon \in(0,\varepsilon_{1})$,
\begin{align*}
    \Div(P_i(\bm{\phi}) X) = 0, \,\forall i\in\{1,\dots,m\} \iff \Div({Q}_i(\bm{\phi}) X)=0, \,\forall i\in\{1,\dots,m\}.
\end{align*}
Therefore, if we define the operator $\mathcal{T}^*(\bm{\phi})$ by
\begin{align*}
    \mathcal{T}^*(\bm{\phi})[X]= -\bigl(\Div({Q}_1(\bm{\phi}) X),\dots,\Div( {Q}_m(\bm{\phi}) X)\bigr),
\end{align*}
then, for all $\varepsilon\in(0,\varepsilon_{1})$, we have that
\begin{align*}
    \mathrm{ker} \mathcal{L}^*(\bm{\phi}) = \mathrm{ker}\mathcal{T}^*(\bm{\phi}).
\end{align*}
Now observe that the operator $\mathcal{T}^*(\bm{\phi})$ can be written as
\begin{align*}
    \mathcal{T}^*(\bm{\phi}) &= \mathcal{T}_0^* + \mathcal{T}_{1}^*(\bm{\phi}), 
\end{align*}
where $\mathcal{T}_0^*$ and $\mathcal{T}_{1}^*(\bm{\phi})$ are given by
\begin{align*}
    \mathcal{T}_0^*[X] &= -\bigl(\Div(X),\Div(g^{-1}\Hess \psi^2\,X), \dots, \Div(g^{-1}\Hess \psi^m\,X)\bigr),\\
    \mathcal{T}_{1}^*(\bm{\phi})[X]&=  -\bigl(\Div(({Q}_1(\bm{\phi})-I)X),\Div(({Q}_2(\bm{\phi})-g^{-1}\Hess \psi^2)\,X), \dots, \Div(({Q}_m(\bm{\phi})-g^{-1}\Hess \psi^m)\,X)\bigr),
\end{align*}
and $\mathcal{T}_{1}^*(\bm{\phi}):H^{s+1}(T\M)\rightarrow H_m^{s}(\M)$ is a bounded linear operator for all $\varepsilon\in(0,\varepsilon_{1})$ and all $s\geq 0$. Hence,
\begin{align*}
    \mathcal{T}(\bm{\phi})\circ\mathcal{T}^*(\bm{\phi}) &= \mathcal{T}_0 \circ \mathcal{T}_0^* + \mathcal{S}(\bm{\phi}),
\end{align*}
where the operator $\mathcal{S}(\bm{\phi})$ is defined by
\begin{align*}
    \mathcal{S}(\bm{\phi})&:=\mathcal{T}_0\circ \mathcal{T}_{1}^*(\bm{\phi}) + \mathcal{T}_{1}(\bm{\phi}) \circ \mathcal{T}_{0}^* + \mathcal{T}_{1}(\bm{\phi})\circ \mathcal{T}_{1}^*(\bm{\phi}).
\end{align*}
The same ellipticity analysis we did for $\mathcal{L}(\bm{\phi})\circ\mathcal{L}^*(\bm{\phi})$ shows that $\mathcal{T}_0 \circ \mathcal{T}_0^*$ is Fredholm. Moreover, as can be seen above, the coefficients of $\mathcal{T}_{1}(\bm{\phi})$ are, uniformly, of $O(\varepsilon)$. Therefore, the operator norm of $\mathcal{S}(\bm{\phi}):H^2(T\M)\rightarrow H^0(T\M)$ can be made arbitrarily small by choosing $\varepsilon>0$ sufficiently small. Hence, we can invoke \cite[Theorem 5.22, Chapter IV]{kato1966perturbation} to conclude that there exists $\varepsilon_{2}\in(0,\varepsilon_1)$ such that, for all $\varepsilon\in(0,\varepsilon_2)$, we have that
\begin{align*}
    \dim \ker \mathcal{T}(\bm{\phi})\circ\mathcal{T}^*(\bm{\phi}) \leq  \dim \ker \mathcal{T}_0\circ \mathcal{T}_0^*.
\end{align*}
We then have the following claim.
\begin{claim*}
$\ker \mathcal{T}_0\circ \mathcal{T}_0^*= \{0\}$. 
\end{claim*}
\begin{proof}
    Because $\mathcal{T}_0 \circ \mathcal{T}_0^*$ is self-adjoint and non-negative, $\ker \mathcal{T}_0 \circ \mathcal{T}_0^*=\ker \mathcal{T}_0^*$.
We now prove that $\ker \mathcal{T}_0^*= \{0\}$.
To this end, if $X\in\ker \mathcal{T}_0^*$, then $X$ must simultaneously satisfy
\begin{align*}
    \Div(X) &= 0, & \Div(g^{-1}\Hess \psi^i\,X)&=0
\end{align*}
for all $i\in\{2,\dots,m\}$.
For any $\chi\in\Cinf$, consider the pairing
\begin{align*}
    \int_{\M} g([\grad\psi^i,\grad\chi],X)\,d\mu_g,
\end{align*}
and compute that
\begin{align*}
    [\grad\psi^i,\grad\chi] = g^{-1}\Hess \chi\,\grad\psi^i - g^{-1}\Hess \psi^i\,\grad \chi.
\end{align*}
In addition, for any vector field $X\in\X$, we have that
\begin{align*}
    X (g(\grad \psi^i,\grad \chi)) &= g(\nabla_X\grad \psi^i,\grad \chi) + g(\grad \psi^i,\nabla_X\grad \chi)\\
    &= \Hess \chi (\grad \psi^i,X) + \Hess\psi^i (\grad \chi,X),
\end{align*}
which implies that
\begin{align*}
    \grad(g(\grad \psi^i,\grad \chi)) = g^{-1}\Hess \chi\,\grad\psi^i + g^{-1}\Hess \psi^i\,\grad \chi.
\end{align*}
Therefore, we obtain that
\begin{align*}
    [\grad\psi^i,\grad\chi] = \grad (g(\grad \psi^i,\grad \chi)) -2 g^{-1}\Hess  \psi^i \grad\chi.
\end{align*}
and we have that
\begin{align*}
    \int_{\M} g([\grad\psi^i,\grad\chi],X)\,d\mu_g& = \int_{\M} g(\grad (g(\grad \psi^i,\grad \chi)),X)\,d\mu_g- 2 \int_{\M} g(g^{-1}\Hess  \psi^i \grad\chi,X)\,d\mu_g.
\end{align*}
By applying the divergence theorem, we compute that
\begin{align*}
   &\int_{\M} g(g^{-1}\Hess  \psi^i \grad\chi,X)\,d\mu_g = \int_{\M} g(\grad\chi,g^{-1}\Hess  \psi^i X)\,d\mu_g = -  \int_{\M} \chi\,\Div(g^{-1}\Hess  \psi^i X) \,d\mu_g = 0, \\
    &\int_{\M} g(\grad (g(\grad \psi^i,\grad \chi)),X)\,d\mu_g= -\int_{\M} g(\grad \psi^i,\grad \chi)\Div(X)\,d\mu_g = 0
\end{align*}
for all $\chi\in\Cinf$, and all $i\in\{2,\dots,m\}$, where we used the assumption that $X\in\ker \mathcal{T}_0^*$. Therefore, if $X\in\ker \mathcal{T}_0^*$, then $X$ is $L^2(\mu_g)$ orthogonal to 
\begin{align*}
    \mathfrak{V}:=\mathrm{span}_{\mathbb{R}}\{[\grad\psi^i,\grad\chi]~|~\chi\in\Cinf, \, i\in\{2,\dots,m\}\}.
\end{align*}
However, we have already established in \cite[Proposition 1]{abdelgalil2026holonomy} that $\mathfrak{V}=\X$ whenever the family $\{\psi^i\}$ contains the monomials 
$
\{\beta^i,(\beta^i)^2,(\beta^i)^3\}
$, and therefore $X=0$.
\end{proof} 
The above claim implies that, 
for any $\varepsilon\in(0,\varepsilon_2)$,  
$$
\{0\}= \ker \mathcal{T}_0^* = \ker \mathcal{T}_0 \circ \mathcal{T}_0^* = \ker \mathcal{T}(\bm{\phi})\circ\mathcal{T}^*(\bm{\phi})=\ker \mathcal{T}^*(\bm{\phi})=\ker \mathcal{L}^*(\bm{\phi}).
$$
Fixing any $\varepsilon\in(0,\varepsilon_{2})$, it follows that the choice $\bm{\phi}_\star = \varepsilon\,\bm{\psi}$ has the claimed properties, i.e., that $\bm{\phi}_\star,\bm{\phi}_\star^c\in O_m$, and that  
$
\mathcal{L}(\bm{\phi}_\star)\circ \mathcal{L}^*(\bm{\phi}_\star):H^{s+2}(T\M)\rightarrow H^{s}(T\M)
$
is an elliptic and invertible linear operator for every $s\geq 0$.
\end{proof}
We now provide the closing argument. Let $m$ and $\bm{\phi}_{\star}$ be given as in Proposition \ref{prop:ellipticity} and let $\mathcal{U}\subset O_m$ be given as in Proposition \ref{prop:tame_inverse}. For all $\bm{\phi}\in \mathcal{U}$, let
\begin{align*}
    d\mathrm{End}_{m}^\dagger(\bm{\phi}):= \mathcal{C}^{-1}(\bm{\phi})\circ \mathcal{L}^\dagger(\bm{\phi})\circ \mathcal{R}^{-1}(\bm{\phi}).
\end{align*}
Then, with $\mathrm{End}_{m}|_{\mathcal{U}}$ being the restriction of $\mathrm{End}_{m}$ to $\mathcal{U}$, the family of operators
\begin{align*}
    d\mathrm{End}_{m}^\dagger: (\mathrm{End}_{m}|_{\mathcal{U}})^*T\Diff \rightarrow \CinfV{m}
\end{align*}
is smooth tame, and, for all $\bm{\phi}\in \mathcal{U}$, we have that 
\[
d\mathrm{End}_{m}(\bm{\phi})\circ d\mathrm{End}_{m}^\dagger(\bm{\phi})=\mathrm{Id}_{T_{\mathrm{End}_{m}(\bm{\phi})}\Diff}.
\]
Hence, from \cite[Part~III, Theorem 1.1.3]{Hamilton1982}, the image under $\mathrm{End}_{m}$ of every open neighborhood of $\bm{\phi}_\star$ in $\mathcal{U}$ contains an open neighborhood of $\mathrm{End}_{m}(\bm{\phi}_\star)$ in $\Diff$. The conclusion of the theorem follows from the discussion at the beginning of Section \ref{sec:technical}, with $r=2m=2+4N+2\tbinom{N+1}{2}$. \hfill $\Box$

\section{Concluding Remark}
An alternative proposal for Balantine-type factorizations into factors generated by constant gradient vector fields \cite{abdelgalil2026holonomy}, when contrasted with the choice of OMT maps, reveals a significant challenge. When composing gradient flows of the potentials $\phi^i$,
\begin{align*}
    \mathrm{End}_{\ell}^{\grad}:\bm{\phi}\mapsto e^{\grad\phi^1}\circ\cdots \circ e^{\grad\phi^\ell},
\end{align*}
a significant departure from composing OMT maps can already be seen when considering the differential of only one entry,
\begin{align*}
    d\mathrm{End}_{1}^{\grad}(\phi)[\delta] = 
    \mathcal B_\phi(\delta) \circ e^{\grad\phi}, &  \mbox{ where }    \mathcal B_\phi(\delta) =\int_0^1 (e^{\tau \grad\phi})_*\grad \delta\, d\tau.
\end{align*}
Unlike the case of OMT maps, this does not reduce to a local differential operator, and as a result the elliptic argument used earlier to show the existence of a tame right inverse does not apply.

\bibliography{refs}
\bibliographystyle{plain}

\end{document}